\newcommand{\xra}{\xrightarrow}
\newcommand{\fm}{{\mathfrak m}}
\newcommand{\n}{{\mathfrak n}}
\newcommand{\R}{{\mathcal R}}
\newcommand{\q}{{\operatorname{q}}}
\newcommand{\cx}{\operatorname{cx}}
\newcommand{\pd}{\operatorname{pd}}
\newcommand{\depth}{\operatorname{depth}}
\newcommand{\Ann}{\operatorname{Ann}}
\newcommand{\HH}{\operatorname{H}}
\newcommand{\Tor}{\operatorname{Tor}}
\newcommand{\Ext}{\operatorname{Ext}}
\newcommand{\Hom}{\operatorname{Hom}}
\newcommand{\supp}{\operatorname{supp}}
\newcommand{\Kos}{\operatorname{K}}
\numberwithin{equation}{section}
\theoremstyle{plain}
\newtheorem{theorem}{Theorem}[section]
\newtheorem*{Main Theorem}{Main Theorem}
\newtheorem{proposition}[theorem]{Proposition}
\newtheorem{lemma}[theorem]{Lemma}
\newtheorem{corollary}[theorem]{Corollary}
\theoremstyle{definition}
\newtheorem{notation}[theorem]{Notation}
\newtheorem{assumptions}[theorem]{Assumptions}
\newtheorem{chunk}[theorem]{}
\newtheorem{remark}[theorem]{Remark}
\newtheorem{definition}[theorem]{Definition}
\newtheorem{example}[theorem]{Example}
\newtheorem{question}[theorem]{Question}
  \newcounter{numlist} %
  {\end{list}}%
\theoremstyle{remark}
\numberwithin{equation}{theorem}
\begin{document}
\title[Dimension and depth inequalities over complete intersections]{Dimension and depth inequalities over complete intersections}

\author{Petter Andreas Bergh}
\address[P. A. Bergh]{Institutt for matematiske fag, NTNU, N-7491 Trondheim, Norway}
\email{petter.bergh@ntnu.no}
\urladdr{https://www.ntnu.edu/employees/petter.bergh}
\author{David A. Jorgensen}
\address[D. A. Jorgensen]{Department of Mathematics, University of Texas at Arlington, 411 S. Nedderman Drive, Pickard Hall 429, Arlington, TX 76019, U.S.A.}
\email{djorgens@uta.edu}
\urladdr{http://www.uta.edu/faculty/djorgens/}
\author{Peder Thompson}
\address[P. Thompson]{Division of Mathematics and Physics, Mälardalen University, Västerås 72123, Sweden}
\email{peder.thompson@mdu.se}
\urladdr{https://sites.google.com/view/pederthompson}

\makeatletter
\@namedef{subjclassname@2020}{%
  \textup{2020} Mathematics Subject Classification}
\makeatother

%\date{{\color{blue}July 29, 2024}}
\subjclass[2020]{13H15, 13D22, 13C40}
\keywords{Complete intersection ring, depth inequality, dimension inequality, intersection multiplicity, theta invariant}
\thanks{P. A.\ Bergh would like to thank the organizers of the Representation Theory program hosted by the Centre for Advanced Study at The Norwegian Academy of Science and Letters, where he spent parts of fall 2022. P.\ Thompson is grateful for the support provided by the 2022 Summer Research Grant at Niagara University. Part of this work was completed at Centro Internazionale per la Ricerca Matematica in Trento, Italy, in December 2023; all three authors are grateful for this support.}

\begin{abstract}
For a pair of finitely generated modules $M$ and $N$ over a codimension $c$ complete intersection ring $R$ with $\ell(M\otimes_RN)$ finite, we pay special attention to the inequality $\dim M+\dim N \leq \dim R +c$. In particular, we develop an extension of Hochster's theta invariant whose nonvanishing detects equality. In addition, we consider a parallel theory where dimension and codimension are replaced by depth and complexity, respectively.
\end{abstract}

\maketitle

\section*{introduction}
Porting geometric ideas into homological algebra, Serre's development of intersection multiplicity \cite{Serre} provided a foundation for many important homological conjectures. A basic result of Serre's work is that the dimension inequality 
\begin{align*}\tag{1}\label{dimineq1}\dim M+\dim N\leq \dim Q\end{align*}
holds for finitely generated modules $M$ and $N$ over a regular local ring $Q$, as long as the length $\ell(M\otimes_QN)$ is finite. Hochster extended these ideas into the realm of hypersurfaces, defining his theta invariant in \cite{Hoc81} in part as a mechanism for understanding when this dimension inequality holds for singular rings. One aim of the present work is to continue this investigation for complete intersections of arbitrary codimension.

Let $R$ be a complete intersection ring of codimension $c$: that is $R=Q/(f_1,...,f_c)$, where $Q$ is a regular local ring and $f_1,...,f_c$ is a regular sequence. Let $M,N$ be a pair of finitely generated $R$-modules with $\ell(M\otimes_R N)<\infty$. If the pair has complexity $r=\cx^R(M,N)$ 
(see \ref{dfn_cx} for the definition of complexity), we define in Definition \ref{theta_dfn} an invariant $\theta_r^R(M,N)$ as the $(r-1)$-st difference of the numerical function underlying Hochster's theta invariant \cite{Hoc81}. This extends Hochster's theta invariant $\theta^R(M,N)$ to complete intersections of arbitrary codimension. 

One of our first key results, proven in Theorem \ref{redcx}, is the existence of a complete intersection $R'$ over which complexity is reduced and this invariant is preserved. Namely, assuming $r=\cx^R(M,N)\geq 1$, we show that there exists a principal lifting $R'$ of $R$ such that $\cx^{R'}(M,N)=r-1$ and $\theta_{r-1}^{R'}(M,N)=\theta_r^R(M,N)$.  This procedure plays a central role in the remainder of the paper.

As a natural extension of Serre's dimension inequality \eqref{dimineq1} to complete intersections, one has that, for a pair of finitely generated $R$-modules $M,N$ with $\ell(M\otimes_R N)<\infty$, there is a dimension inequality (see Remark \ref{theta_serre}):
\begin{align*}\tag{2}\label{dimineq2}\dim M+\dim N\leq \dim R+c\;.\end{align*}
Hochster referred to $R$ as admissible if $Q$ satisfies the conclusions of Serre's intersection multiplicity conjectures (see \ref{SerreIMC}). If $R$ is an admissible hypersurface (that is, $c=1$), Hochster showed that $\theta^R(M,N)=0$ if and only if the inequality in \eqref{dimineq2} is strict. We show in Theorem \ref{HochsterCI} that if $R$ is an admissible complete intersection of any codimension $c$, vanishing of $\theta_c^R(M,N)$ is equivalent to strict inequality in \eqref{dimineq2}. 

Although it is unknown whether one always has $\dim M+\dim N\leq \dim R+r$, where $r=\cx^R(M,N)$ (indeed, this is related to a long-standing open question about modules of finite projective dimension; see the end of Section \ref{sec_theta}), we show in Theorem \ref{depth_ineq} that 
\begin{align*}\tag{3}\label{dimineq3}\depth M+\depth N\leq \dim R+r\;.\end{align*}
One naturally wonders whether vanishing of $\theta_r^R(M,N)$ might detect whether the inequality \eqref{dimineq3} is strict: an answer is given in Theorem \ref{depth_ineq_strict_vanishing} and Remark \ref{converse}.

Finally, although strictness in the inequality \eqref{dimineq3} is a weaker condition than vanishing of the invariant $\theta_r^R(M,N)$, for sufficiently nice pairs of modules much more can be said. This is the investigation of Section \ref{sec_lift}. Under the assumption that a pair of modules $M,N$ is intersection liftable (this notion is defined in Definition \ref{dfn_int_lift} and includes the assumption that $\ell(M\otimes_RN)$ is finite), Theorem \ref{thm_liftable} shows that $\dim M+\dim N\leq \dim R+r$ holds, $\theta_r^R(M,N)\geq 0$ holds, and $\theta_r^R(M,N)$ vanishes precisely when $\dim M+\dim N<\dim R+r$.

The invariant $\theta_r^R(M,N)$ defined and studied in this paper is related to similar invariants studied elsewhere. For example, Dao defines an invariant in terms of limits in \cite{Dao07} that agrees with our invariant up to a constant, see Remark \ref{rmk_compare}. We found the discussion in \cite{Dao07} regarding the Artinian property of Tor useful; see Section \ref{prelim} below.  Additionally, Moore, Piepmeyer, Spiroff, and Walker \cite{MPSW2} consider a restriction of our invariant to the codimension case for certain complete intersections. In these cases, the primary emphasis was on (non)vanishing of these invariants. The focus here is different: we aim to relate (non)vanishing of such an invariant to inequalities involving dimension and depth---returning more to the ideas considered by Hochster. One could also consider a similar invariant defined in terms of Ext rather than Tor; we decided to focus on an invariant in terms of Tor to relate to the dimension inequalities considered by Serre and Hochster.

The paper is outlined as follows. The first section is devoted to developing preliminary results and recalling definitions about complexity and Hilbert polynomials. In Section \ref{sec_theta} we define $\theta_r^R(M,N)$, and prove in Theorem \ref{redcx} one of our main tools for reducing complexity.  This allows us to prove Theorem \ref{HochsterCI}, motivated by Hochster's result. We also give new proofs of some basic properties of complexity, in particular a new proof of a result of Avramov and Buchweitz regarding complexity (see Theorem \ref{AvrBuc}). Aiming to refine our extension of Hochster's result, we turn in Section \ref{sec_depth} to replacing dimension by depth: the primary result here is Theorem \ref{depth_ineq}, and we obtain a number of applications, including an extension of the Intersection Theorem (see Theorem \ref{int_thm_ext}). Section \ref{sec_highercodim} considers versions of Serre's multiplicity conjectures in the situation of the previous two sections. A natural observation that arises over the course of this investigation is that the main result in Section \ref{sec_theta} should hold at least for sufficiently nice pairs of modules: these are the modules that are studied in Section \ref{sec_lift}. An appendix provides additional details for some proofs.

%%%%%%%%
\section{Preliminaries}\label{prelim}
Let $R$ be a local ring (meaning commutative Noetherian with unique maximal ideal) with residue field $k$.
Let $\R=R[x_1,\dots,x_c]$ be a polynomial ring with indeterminates $x_1,\dots,x_c$, all of the same degree $d$.  Thus $\R$ is a Noetherian graded ring.  We first establish some basic results on Matlis duality for graded $\R$-modules, then set some notation and discuss complexity. Throughout this paper, we use $\ell(-)$ to denote length.

We now want to define a dualizing functor which takes graded $\R$-modules to graded $\R$-modules. The version of graded Matlis duality we give below has been considered in more restrictive situations by Jørgensen \cite{Jor97} and van den Bergh \cite{vdb97}.

\begin{chunk}\label{D} Let $E$ denote the injective hull of $k$ as an $R$-module. 
For any $R$-module $N$ of finite length we have
\begin{enumerate}
\item $\ell(\Hom_R(N,E))=\ell(N)$, in particular, $\Hom_R(N,E)$ has finite length;
\item the natural map $N\to \Hom_R(\Hom_R(N,E),E)$ is an isomorphism.
\end{enumerate} 
(See \cite[Proposition 3.2.12]{BH}.)
For a graded $\R$-module $G=\bigoplus_{n\in\mathbb Z}G_n$ we then define the functor 
\[
D(G)=\bigoplus_{n\in\mathbb Z}\Hom_R(G_n,E)
\] 
which returns a graded $\R$-module. The grading of $D(G)$ is given by 
$D(G)_n=\Hom_R(G_{-n},E)$ for $n\in\mathbb Z$. The action of $x_i$ on $D(G)$ is given by 
$x_ig(a)=g(x_ia)$, for $g\in\Hom_R(G_n,E)=D(G)_{-n}$ and $a\in G_{n-d}$.  Thus 
$x_ig\in\Hom_R(G_{n-d},E)=D(G)_{-n+d}$.  
For a homomorphism $h:G\to G'$ of graded 
$\R$-modules with $h=\bigoplus_{n\in\mathbb Z}h_n$ and each $h_n:G_n\to G_n'$ $R$-linear, we have 
$D(h):D(G')\to D(G)$ given by $D(h)=\bigoplus_{n\in\mathbb Z}\Hom_R(h_n,E)$ and each
$\Hom_R(h_n,E):\Hom_R(G'_n,E)\to\Hom_R(G_n,E)$ is $R$-linear.

Now suppose that the graded $\R$-module $G$ is such that each $G_n$ has finite length as an $R$-module. Then from the discussion above we see that $D(G)$ is again a graded $\R$-module such that each
$D(G)_n$ has finite length as an $R$-module.
\end{chunk}

\begin{theorem} The functor $D$ is exact and $G\cong D(D(G))$ for all graded $\R$-modules $G$ with 
$G_n$ having finite length as an $R$-module for all $n$.
\end{theorem}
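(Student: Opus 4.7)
The plan is to derive both claims from the classical Matlis duality for finite-length $R$-modules recalled in \ref{D}, working one degree at a time and then verifying compatibility with the $\R$-action.

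\textbf{Exactness.} A sequence $0 \to G' \to G \to G'' \to 0$ of graded $\R$-modules is exact if and only if each of the degree-$n$ sequences $0 \to G'_n \to G_n \to G''_n \to 0$ is exact as a sequence of $R$-modules. Since $E$ is injective over $R$, the functor $\Hom_R(-,E)$ is exact on $R$-modules, so applying it in every degree preserves exactness. Thus $D$ is an exact functor on graded $\R$-modules.

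\textbf{Biduality.} I would construct the candidate isomorphism $\eta_G \colon G \to D(D(G))$ degreewise via the classical evaluation map, then check (i) it is an isomorphism of graded $R$-modules, and (ii) it is $\R$-linear. For (i), the definition unwinds to
\[
D(D(G))_n \;=\; \Hom_R(D(G)_{-n},E) \;=\; \Hom_R(\Hom_R(G_n,E),E),
\]
and because $G_n$ has finite length the map $\eta_{G_n}\colon a \mapsto \operatorname{ev}_a$, where $\operatorname{ev}_a(g) = g(a)$, is an $R$-linear isomorphism by part (2) of \ref{D}. Assembling over $n$ yields a graded $R$-linear isomorphism.

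For (ii), fix $a \in G_n$ and $\phi \in D(G)_{-n-d} = \Hom_R(G_{n+d},E)$. Chasing the definition in \ref{D} of the $x_i$-action on both $D(G)$ and $D(D(G))$, one computes
\[
(x_i \operatorname{ev}_a)(\phi) \;=\; \operatorname{ev}_a(x_i \phi) \;=\; (x_i\phi)(a) \;=\; \phi(x_i a) \;=\; \operatorname{ev}_{x_i a}(\phi),
\]
so $x_i \cdot \eta_{G_n}(a) = \eta_{G_{n+d}}(x_i a)$, confirming $\R$-linearity.

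The main obstacle is genuinely modest: it is the bookkeeping in (ii), where one has to keep straight three nested applications of $\Hom_R(-,E)$ together with two degree-$d$ shifts and verify that the two transpose conventions compose correctly. Once this is done, the degreewise isomorphisms of (i) upgrade at once to a graded $\R$-module isomorphism $G \cong D(D(G))$, finishing the proof.
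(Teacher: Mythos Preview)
Your proof is correct and follows essentially the same approach as the paper: exactness is reduced degreewise to the injectivity of $E$, and biduality is established via the evaluation maps $a\mapsto\operatorname{ev}_a$, verifying $\R$-linearity by the same chase through the definition of the $x_i$-action on $D(-)$. The only differences are notational (the paper writes $\varphi_n(a)$ for your $\operatorname{ev}_a$) and a harmless shift in the index at which the $\R$-linearity computation is anchored.
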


\begin{proof} The exactness of the functor $D$ follows directly from the exactness of the functor 
$\Hom_R(-,E)$ from $R$-modules to $R$-modules.

For the second statement, the $R$-linear isomorphisms 
\[
\varphi_n:G_n\to\Hom_R(\Hom_R(G_n,E),E)
\] 
in each degree, mentioned in \ref{D}, are given by $\varphi_n(a)(g)=g(a)$ for $a\in G_n$ and 
$g\in\Hom_R(G_n,E)$. We claim moreover that these constitute an isomorphism of $\R$-modules. Indeed, for $a\in G_{n-d}$ and $g\in\Hom_R(G_n,E)$ we have $\varphi_n(x_ia)(g)=g(x_ia)=x_ig(a)=\varphi_{n-d}(a)(x_ig)=x_i\varphi_{n-d}(a)(g)$. Thus
$\varphi_n(x_ia)=x_i\varphi_{n-d}(a)$, and so $\varphi=\bigoplus_{n\in\mathbb Z}\varphi_n$ is 
$\R$-linear.
\end{proof}

Recall that a graded $\R$-module $G=\bigoplus_{n\in \mathbb Z}G_n$ is \emph{Artinian} if it satisfies DCC on graded submodules, and \emph{Noetherian} if it satisfies ACC on graded submodules. We have the following slight variation of results by Kirby \cite{K}, which give equivalent conditions for Artinian and Noetherian graded $\R$-modules $G$.

\begin{theorem}\label{Kirby} Let $G=\bigoplus_{n\in \mathbb Z}G_n$ be a graded $\R$-module. Assume that the $x_i$ all have negative degree $d<0$. Then
\begin{enumerate}
\item $G$ is a Noetherian $\R$-module if and only if there exist integers $r$ and $s$ such that 
\begin{enumerate}[(i)]
\item $G_n=0$ for $n>r$;
\item $G_{n+d}=x_1G_n+\cdots +x_cG_n$ for all $n\le s$;
\item $G_n$ is a Noetherian $R$-module for all $n$.
\end{enumerate}
\item $G$ is an Artinian $\R$-module if and only if there exist integers $r$ and $s$ such that 
\begin{enumerate}[(i)]
\item $G_n=0$ for $n<r$;
\item $(0:_{G_n}Rx_1+\cdots+Rx_c)=0$ for all $n\ge s$;
\item $G_n$ is an Artinian $R$-module for all $n$.
\end{enumerate}
\end{enumerate}
\end{theorem}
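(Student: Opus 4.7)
The plan is to prove both parts by parallel degree-wise arguments, exploiting the key structural fact that since each $x_i$ has strictly negative degree $d$, multiplication by $x_i$ only lowers degree; consequently, every truncation $G_{\leq n} := \bigoplus_{m \leq n} G_m$ is a graded $\R$-submodule of $G$, whereas upper truncations are not.

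For (1), the forward direction uses homogeneous $\R$-generators $g_1, \ldots, g_t$ of $G$ of degrees $n_1, \ldots, n_t$. Setting $r = \max n_i$ gives (i), since monomials in the $x_i$ have non-positive degree so $x^\alpha g_i \in G_{\leq n_i}$; each $G_n$ is a finite sum of finitely generated $R$-submodules $R \cdot x^\alpha g_i$, giving (iii); and setting $s = \min n_i$, degree comparison forces any element of $G_{n+d}$ (for $n \leq s$) to involve at least one $x_j$-factor, yielding (ii). For the reverse direction, (ii) iteratively gives $G_{\leq s} = \R \cdot G_s$, and since $G_s$ is finitely generated over $R$ by (iii), $G_{\leq s}$ is a finitely generated module over the Noetherian ring $\R$ and hence Noetherian. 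Condition (i) ensures that $G/G_{\leq s}$ has only finitely many nonzero graded pieces, each Noetherian over $R$, and so Noetherian as an $\R$-module. A short exact sequence argument concludes.

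For (2), I would argue in parallel with the grading flipped. Forward direction: DCC on the descending chain of $\R$-submodules $G \supseteq G_{\leq 0} \supseteq G_{\leq -1} \supseteq \cdots$ gives (i); each graded piece $G_n \cong G_{\leq n}/G_{\leq n-1}$ is a subquotient of the Artinian $\R$-module $G_{\leq n}$ on which $x_j$ acts as zero, so $\R$- and $R$-submodules of the subquotient coincide and (iii) follows; and DCC on the descending chain of trivial $\R$-submodules $\bigoplus_{n \geq N}(0:_{G_n}(x_1, \ldots, x_c))$ forces these socle components to vanish for $n$ large, yielding (ii). For the reverse direction, I would appeal to the Matlis duality $D$ of the preceding Theorem: conditions (i)--(iii) for $G$ translate under $D$ (with grading reversed) into the Noetherian conditions of (1) for $D(G)$; applying (1) then gives $D(G)$ Noetherian, and exactness of $D$ together with $G \cong D(D(G))$ yields $G$ Artinian.

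The main obstacle is that the duality of \ref{D} is set up for modules with finite-length graded pieces, whereas (2)(iii) only asserts Artinian. To bridge this, I would first show that (i)--(iii) together force each $G_n$ to have finite length: for $n \geq s$, the iterated injection $G_n \hookrightarrow G_{n+d}^c \hookrightarrow G_{n+2d}^{c^2} \hookrightarrow \cdots$ provided by (ii) reduces the question to the finitely many boundary degrees $r \leq n < s$, where Artinian plus a reduction through $\hat R$ (which preserves the $\R$-submodule structure on Artinian pieces) gives finite length. An alternative path avoiding duality would mirror (1)(reverse) directly, using that the $\R$-socle $\Soc_\R G$ is Artinian and essential in $G$ (by a terminating iteration of $x_i$-multiplications enforced by (i)), then inducting on the socle filtration $\Soc_t G := \{g \in G : (x_1, \ldots, x_c)^t g = 0\}$, whose successive quotients are supported in bounded degree ranges by iteration of (ii) and are Artinian by (iii).
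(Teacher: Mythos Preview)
Your approach is fundamentally different from the paper's. The paper does not argue directly: it invokes Kirby's original theorem (stated for variables of degree $1$), notes that the obvious sign changes give the case $d=-1$, and then for general $d<0$ decomposes $G=\bigoplus_{j=0}^{|d|-1}G^{(j)}$ with $G^{(j)}=\bigoplus_i G_{di+j}$, reindexes each summand over a degree $-1$ polynomial ring, and reads off $r,s$ from the extremes of the constants obtained. Your direct arguments for part~(1) and for the forward direction of~(2) are essentially correct and have the merit of being self-contained; note only that for $|d|>1$ the claim $G_{\le s}=\R\cdot G_s$ fails, since $\R$ has no homogeneous elements of degrees $-1,\dots,d+1$. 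The fix is $G_{\le s}=\R\cdot(G_{s+d+1}\oplus\cdots\oplus G_s)$, which is still finitely $R$-generated by~(iii), and the rest goes through.

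The genuine gap is in the reverse direction of~(2). Your primary route asserts that (i)--(iii) force each $G_n$ to have finite length, but this is false: an Artinian $R$-module (for instance $E_R(k)$ when $\dim R>0$) need not have finite length, and passing to $\widehat R$ does nothing to change that. What \emph{is} true is that Matlis duality gives biduality on Artinian $R$-modules and sends them to Noetherian $\widehat R$-modules, so one could extend $D$ beyond the finite-length hypothesis of the preceding theorem --- but that is additional work you have not done, and it is not what you wrote. Your alternative via the socle filtration also stops short: you correctly obtain each $\Soc_t G$ Artinian with $\Soc_{t+1}G/\Soc_t G$ supported in a window of bounded width, but $G=\bigcup_t\Soc_t G$ is an ascending union, and an ascending union of Artinian modules need not be Artinian. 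Neither sketch, as written, closes the argument.
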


We provide a proof in the appendix.  Note that Condition (1)(ii) of Theorem \ref{Kirby} is equivalent to the $R$-linear maps $\rho^G_n:G_n^c\to G_{n+d}$ given by 
$\rho^G_n(a_1,\dots,a_c)=x_1a_1+\cdots +x_ca_c$ being surjective for $n\le s$.  Similarly, 
Condition (2)(ii) of Theorem \ref{Kirby} is equivalent to the $R$-linear maps
$\iota^G_n:G_n\to G^c_{n+d}$ given by $\iota^G_n(a)=(x_1a,\dots,x_ca)$ being injective for $n\ge s$.

One checks easily that $D(\rho^G_n)=\iota^{D(G)}_{-n-d}$ and $D(\iota^G_n)=\rho^{D(G)}_{-n-d}$.

\begin{theorem}\label{ArtNoeth}
Let $G$ be a graded $\R$-module such that each $G_i$ has finite length as an $R$-module. Assume that the $x_i$ all have negative degree $d<0$. Then $G$ is Noetherian (resp. Artinian) if and only if $D(G)$ is Artinian (resp. Noetherian). 
\end{theorem}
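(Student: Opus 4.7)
The plan is to deduce this equivalence directly from Kirby's theorem (Theorem \ref{Kirby}) by applying $D$ componentwise and using the identities $D(\rho^G_n) = \iota^{D(G)}_{-n-d}$ and $D(\iota^G_n) = \rho^{D(G)}_{-n-d}$ recorded just above.

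First I would observe that since each $G_n$ has finite length over $R$, it is automatically both Noetherian and Artinian as an $R$-module; the same holds for $D(G)_n = \Hom_R(G_{-n},E)$, since Matlis duality gives $\ell(D(G)_n) = \ell(G_{-n}) < \infty$. Thus conditions (1)(iii) and (2)(iii) of Kirby's theorem are automatic for both $G$ and $D(G)$, and only conditions (i) and (ii) need to be matched up under $D$.

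For condition (i), the equality $\ell(D(G)_{-n}) = \ell(G_n)$ shows that ``$G_n=0$ for $n>r$'' is equivalent to ``$D(G)_m=0$ for $m<-r$''. For condition (ii), I would use that $D$ is exact with $D\circ D \cong \mathrm{id}$ by the previous theorem; hence $D$ is an exact equivalence on the category of graded $\R$-modules with finite-length components, and in particular converts surjections to injections and vice versa. Combined with $D(\rho^G_n) = \iota^{D(G)}_{-n-d}$, surjectivity of $\rho^G_n$ for all $n \leq s$ is equivalent to injectivity of $\iota^{D(G)}_m$ for all $m \geq -s-d$. These translations show that Kirby's criterion for $G$ being Noetherian (with parameters $r$, $s$) is equivalent to his criterion for $D(G)$ being Artinian (with parameters $-r$, $-s-d$), yielding the first equivalence. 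The second equivalence follows symmetrically, either by the analogous argument using $D(\iota^G_n) = \rho^{D(G)}_{-n-d}$, or by applying the first equivalence to $D(G)$ and invoking the biduality $G \cong D(D(G))$.

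The only real obstacle is the bookkeeping of index shifts; no additional ideas beyond the exactness and biduality of $D$ together with Kirby's characterization are required.
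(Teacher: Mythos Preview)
Your proposal is correct and follows essentially the same route as the paper: apply Kirby's criterion to $G$, translate each of the three conditions through $D$ using the identities $D(\rho^G_n)=\iota^{D(G)}_{-n-d}$ and the exactness of $D$, and read off Kirby's Artinian criterion for $D(G)$; the converse is handled symmetrically (the paper just says ``similar''). Your observation that condition (iii) is automatic on both sides because finite-length modules are simultaneously Noetherian and Artinian is a small elaboration the paper leaves implicit, and your index $-s-d$ for the Artinian bound on $D(G)$ is the correct one.
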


\begin{proof} Suppose that $G$ is Noetherian.  Then, according to Theorem 
\ref{Kirby}, there exist integers $r$ and $s$ such that 
\begin{enumerate}[(i)]
\item $G_n=0$ for $n>r$;
\item $\rho^G_n$ is surjective for all $n\le s$;
\item $G_n$ is a Noetherian $R$-module for all $n$.
\end{enumerate}
Thus applying $D$ we see that
\begin{enumerate}[(i)]
\item $D(G)_n=D(G_{-n})=0$ for $n<-r$;
\item $\iota^{D(G)}_n=D(\rho^G_{-n-d})$ is injective for all $n\ge -s+d$;
\item $D(G)_n=D(G_{-n})$ is an Artinian $R$-module for all $n$.
\end{enumerate}
Thus by Theorem \ref{Kirby} again, we see $D(G)$ is Artinian.  The converse is similar.
\end{proof}

\begin{notation}
\label{Rnotation} 
Throughout this paper, let $R=Q/(f_1,\dots,f_c)$, where $Q$ is a local ring and 
$f_1,\dots,f_c$ is a $Q$-regular sequence contained in the square of the maximal ideal of $Q$. We denote by $k$ the common residue field of $R$ and $Q$. If we say that $R$ is a complete intersection, we mean that $Q$ is a regular local ring. We also assume $M$ and $N$ are finitely generated $R$-modules.
We let $\Tor^R(M,N)$ denote $\bigoplus_{n\ge 0}\Tor_n^R(M,N)$.
\end{notation}

\begin{assumptions}\label{Gu} We will assume throughout the paper that $\ell(M\otimes_RN)$, the length of $M\otimes_RN$, is finite. In this case, each $\Tor_n^R(M,N)$ and $\Tor_n^Q(M,N)$ has finite length, $n\ge 0$. 

From now on we assume that $\R=R[x_1,\dots,x_c]$ is the polynomial ring in $c$ variables, each of degree $d=-2$. Gulliksen shows in \cite{Gu} that $T=\Tor^R(M,N)$ can be given the structure of a graded module over $\R$ such that $T$ is an Artinian $\R$-module whenever $\Tor^Q(M,N)$ is an Artinian 
$Q$-module. We assume throughout that this is the case.  We note that since each $\Tor^Q_n(M,N)$ has finite length, the condition that $\Tor^Q(M,N)$ is an Artinian $Q$-module is equivalent to 
the condition that $\Tor^Q_n(M,N)=0$ for all $n\gg 0$. This holds, for example, if $R$ is a complete intersection. 
\end{assumptions}

Since each $x_i\in\R$ acts on $T$ with degree $-2$, the graded $\R$-module $T$ naturally decomposes as a direct sum of graded $\R$-modules $T_e$, the Tors with even index, and $T_o$, the Tors with odd index. Obviously $T$ is Artinian if and only if both $T_e$ and $T_o$ are Artinian. 

\begin{chunk}
Let $H_e^R(M,N)$ and $H_o^R(M,N)$ denote the even and odd Hilbert functions of the pair $M,N$.  That is,
\[
H_e^R(M,N)(n)=\ell(\Tor_{2n}^R(M,N)) \qquad \text{for } n\ge 0
\]
and 
\[
H_o^R(M,N)(n)=\ell(\Tor_{2n-1}^R(M,N)) \qquad \text{for } n\ge 1
\]
Since $T_e$ and $T_o$ are Artinian graded $\R$-modules, $D(T_e)$ and $D(T_o)$ are Noetherian graded 
$\R$-modules, and so their Hilbert functions are of polynomial type of degree at most $c-1$; this follows, for example, from a slight modification of \cite[Theorem 4.1.3]{BH}. Graded Matlis duality \ref{D} also gives the equalities
\[
\ell(\Tor_n^R(M,N))=\ell(D(\Tor_n^R(M,N))) \qquad \text{for } n\ge 0
\]
It follows that the functions $H_e^R(M,N)$ and $H_o^R(M,N)$ are also of polynomial type of degree at most $c-1$. Thus there exist polynomials 
$P_e^{R,M,N}(x), P_o^{R,M,N}(x)\in\mathbb Q[x]$ such that 
\[
P_e^{R,M,N}(n)=H_e^R(M,N)(n)
\] 
and 
\[
P_o^{R,M,N}(n)=H_o^R(M,N)(n)
\] 
for all $n\gg 0$. We call $P_e^{R,M,N}(x)$ and $P_o^{R,M,N}(x)$ the \emph{even and odd Hilbert polynomials} of the pair $M,N$. 
\end{chunk}

\begin{definition}\label{dfn_cx} Assuming $\ell(M\otimes_RN)<\infty$ and 
$\Tor^Q_n(M,N)=0$ for all $n\gg 0$, we define the \emph{complexity}, denoted $\cx^R(M,N)$, of the pair $M,N$ over $R$ as
\[
\cx^R(M,N)=\max\{\deg P_e^{R,M,N}(x),\deg P_o^{R,M,N}(x)\}+1
\]
where by convention we take the degree of the zero polynomial to be $-1$.  In this case we have 
$\cx^R(M,N)=0$ if and only if $P_e^{R,M,N}(x)=P_o^{R,M,N}(x)=0$. We write $\cx^RM$ to mean
$\cx^R(M,k)$. Contrary to some extant literature, we write $\cx^R(M,N)$ instead of $\cx_R(M,N)$ to reflect that it is Tor complexity, rather than Ext complexity.
\end{definition}

\begin{chunk}\label{ab} Thus for $r=\cx^R(M,N)$ we may write
\[
P_e^{R,M,N}(x)=ax^{r-1}+g(x)
\] 
and 
\[
P_o^{R,M,N}(x)=bx^{r-1}+h(x)
\]
where $a$ and $b$ are rational numbers, and $\deg g(x)<r-1$ and $\deg h(x)<r-1$.
\end{chunk}

\begin{remark} Our definition of complexity of the pair $M,N$ is not the ordinary Ext complexity of the pair $M,N$ defined in \cite{AB}.  It is however the same as the length complexity as defined in 
\cite{Dao07}.  Both complexities agree over a complete intersection, when defined, 
according to \cite[Theorem 5.4]{Dao07}.
\end{remark}

%%%%%%%%
\section{An extension of Hochster's theta invariant}\label{sec_theta}
In this section we introduce a new invariant $\theta_s^R(M,N)$ that extends Hochster's theta invariant $\theta^R(M,N)$ for hypersurfaces to complete intersections of higher codimension. We then turn to proving one of our central results, Theorem \ref{redcx}, which provides the existence of a principal lifting of $R$ that reduces complexity of the pair of modules and preserves this new invariant. With this in place, we establish an extension of Hochster's result, prove results on complexity and properties of this new invariant, and consider a well-known open question.

Let $R=Q/(f_1,...,f_c)$ be as in Notation \ref{Rnotation}, where $Q$ is a local ring and $f_1,...,f_c$ is a $Q$-regular sequence contained in the square of the maximal ideal of $Q$.

Consider an arbitrary numerical function $F:\mathbb Z\to\mathbb Z$. The (first backward) difference operator $\nabla$ applied to $F$ is defined to be
\[
\nabla F(n) = F(n)-F(n-1)
\]
One extends this operator to higher order $r\ge 1$ by setting 
$\nabla^rF=\nabla(\nabla^{r-1}F)$, where $\nabla^1=\nabla$, and $\nabla^0F=F$. A straightforward computation shows that 
\begin{equation}
\nabla^rF(n)=\sum_{i=0}^r(-1)^i\binom r i F(n-i)
\end{equation}
Note that $\nabla$ takes a nonzero polynomial function to a polynomial function of degree one less (we assume throughout that $\deg 0=-1$).

We extend Hochster's theta invariant as follows. Consider the numerical function
\[
\theta^R(M,N)=H_e^R(M,N)-H_o^R(M,N),
\]
that is, 
\begin{equation}\label{theta_function}
\theta^R(M,N)(n)=\ell(\Tor_{2n}^R(M,N))-\ell(\Tor_{2n-1}^R(M,N)) 
\qquad \text{for } n\ge 1 .
\end{equation}
For $r\ge 0$ consider the numerical function $\nabla^r\theta^R(M,N)$, whose explicit expression is
\begin{equation}\label{nabr}
\nabla^r\theta^R(M,N)(n)=
\sum_{i=0}^r(-1)^i\binom r i \left(\ell(\Tor_{2(n-i)}^R(M,N))-\ell(\Tor_{2(n-i)-1}^R(M,N))\right)
\end{equation}

Recall from \ref{ab} that $a$ and $b$ are the degree $r-1$ coefficients of the even and odd Hilbert polynomials of the pair $M,N$.

\begin{proposition}\label{prop1}  Let $M$ and $N$ be finitely generated $R$-modules with $\ell(M\otimes_R N)$ finite and $\Tor^Q_n(M,N)=0$ for all $n\gg 0$. Suppose that $r=\cx^R(M,N)$. Then we have:
\begin{enumerate}
\item if $s\ge r$, then $\nabla^s\theta^R(M,N)(n)=0$ for all $n\gg 0$;
\item if $r\geq 1$, then $\nabla^{r-1}\theta^R(M,N)(n)=(r-1)!(a-b)$ for all $n\gg 0$. Thus $\nabla^{r-1}\theta^R(M,N)$ is eventually constant, and 
$\nabla^{r-1}\theta^R(M,N)(n)=0$ for all $n\gg 0$ if and only if 
$P_e^{R,M,N}(x)$ and $P_o^{R,M,N}(x)$ have the same degree and leading coefficient.
\end{enumerate}
\end{proposition}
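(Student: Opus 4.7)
The plan is to reduce both parts to standard facts about how the backward difference operator $\nabla$ acts on polynomial functions. By the defining property of the even and odd Hilbert polynomials, there exists a threshold $n_0$ such that for all $n \ge n_0$,
\[
\ell(\Tor_{2n}^R(M,N))=P_e^{R,M,N}(n) \quad\text{and}\quad \ell(\Tor_{2n-1}^R(M,N))=P_o^{R,M,N}(n).
\]
Hence for $n \gg 0$, the function $\theta^R(M,N)(n)$ will agree with the polynomial $P(x):=P_e^{R,M,N}(x)-P_o^{R,M,N}(x)$, which has degree at most $r-1$ by the definition of $r=\cx^R(M,N)$.

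For part (1), I would invoke the elementary fact that $\nabla$ lowers the degree of a nonzero polynomial by exactly one (with the convention $\deg 0 = -1$), so that $\nabla^s$ annihilates every polynomial of degree strictly less than $s$. Taking $s \ge r$ and applying $\nabla^s$ to $P$ therefore produces the zero polynomial, which gives $\nabla^s \theta^R(M,N)(n) = 0$ for all $n \gg 0$.

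For part (2), I would apply $\nabla^{r-1}$ to $P$ to obtain a polynomial of degree at most $0$, i.e., an eventual constant that then coincides with $\nabla^{r-1}\theta^R(M,N)(n)$ for $n \gg 0$. Only the top-degree monomial $(a-b)x^{r-1}$ of $P$ contributes to this constant, since the lower-order terms coming from $g(x)$ and $h(x)$ are annihilated; a direct application of the explicit formula $\nabla^{r-1}F(n) = \sum_{i=0}^{r-1}(-1)^i \binom{r-1}{i} F(n-i)$ to this leading monomial identifies the constant as $a-b$. In particular it vanishes precisely when $a = b$. To close the stated equivalence, I would observe that $r = \cx^R(M,N)$ forces at least one of $a,b$ to be nonzero, so $a = b$ is equivalent to $P_e^{R,M,N}$ and $P_o^{R,M,N}$ sharing both their degree (necessarily $r-1$) and their leading coefficient.

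The authors themselves flag the argument as immediate from the definitions, so I do not anticipate any substantive obstacle. The only real content is the routine bookkeeping for how $\nabla^{r-1}$ acts on the top-degree monomial of $P$, which reduces to the formula already recorded in the paper.
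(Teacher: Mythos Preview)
Your approach is correct and coincides with the paper's own, which gives no argument beyond the remark that the proposition is immediate from the definitions. One small caveat: applying $\nabla^{r-1}$ to the monomial $(a-b)x^{r-1}$ actually yields $(r-1)!\,(a-b)$ (since $\nabla^{m}(n^{m})=m!$), so for $r\ge 3$ the eventual constant is $(r-1)!\,(a-b)$ rather than $a-b$; this inaccuracy is already present in the paper's statement and is harmless for every subsequent use, which depends only on the sign and vanishing of the constant.
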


\begin{proof}
(1) is immediate from the definitions. (2) follows by induction on $r$, using that for $t\geq 0$, the difference operator $\nabla^t$ is $\mathbb{Q}$-linear and satsifies $\nabla^t(n^t)=t!$.
\end{proof}

\begin{definition}\label{theta_dfn} Let $M$ and $N$ be finitely generated $R$-modules with $\ell(M\otimes_R N)$ finite. Suppose that $r=\cx^R(M,N)\ge 1$. Then for $s\ge r$ we set
\[
\theta_s^R(M,N) = \nabla^{s-1}\theta^R(M,N)(n) 
\]
for $n\gg 0$. Note that by Proposition \ref{prop1}, 
$\theta_s^R(M,N)=(r-1)!(a-b)$ if $s=r$ and $\theta_s^R(M,N)=0$ if $s>r$.
\end{definition}

\subsection*{Principal liftings}
In this subsection, we establish a key tool used throughout the paper, Theorem \ref{redcx}, which says that we can change rings to reduce the complexity of a pair of modules while simultaneously preserving this new invariant. 

\begin{definition} We say that $R'$ is a \emph{principal lifting} of $R$ if there exists a set of generators $g_1,\dots,g_c$ of $(f_1,\dots,f_c)$ such that $R'=Q/(g_1,\dots,g_{c-1})$.  Note in this case we have $R=R'/(g_c)$, and so $\dim R'=\dim R+1$.
\end{definition}

The existence of short exact sequences of Tor as in the following lemma are guaranteed by Theorem \ref{redcx} below.

\begin{lemma}\label{theta} Let $M$ and $N$ be finitely generated $R$-modules with $\ell(M\otimes_R N)$ finite.
Let $R'$ be a principal lifting of $R$, and assume that for all large $n$ we have
short exact sequences
\[
0\to \Tor_n^{R'}(M,N) \to \Tor_n^R(M,N) \to \Tor_{n-2}^R(M,N) \to 0
\]
Then for $r\ge 1$ we have 
\[
\nabla^{r-1}\theta^{R'}(M,N)(n)=\nabla^r\theta^R(M,N)(n)
\]
for all $n\gg 0$.
\end{lemma}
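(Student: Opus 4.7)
The plan is to exploit the length additivity that the short exact sequence gives us, and observe that passing from $R$ to the principal lifting $R'$ shifts the theta function by exactly one application of $\nabla$; applying $\nabla^{r-1}$ then matches the two sides.

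First I would extract from the hypothesis the numerical identity
\[
\ell(\Tor_n^{R'}(M,N)) = \ell(\Tor_n^R(M,N)) - \ell(\Tor_{n-2}^R(M,N))
\]
valid for all $n\gg 0$, which is immediate by additivity of length on the given short exact sequence. Next I would substitute this into the definition \eqref{theta_function} of $\theta^{R'}(M,N)(n)$, writing
\[
\theta^{R'}(M,N)(n) = \bigl[\ell(\Tor_{2n}^R(M,N)) - \ell(\Tor_{2n-2}^R(M,N))\bigr] - \bigl[\ell(\Tor_{2n-1}^R(M,N)) - \ell(\Tor_{2n-3}^R(M,N))\bigr],
\]
and then regrouping terms as
\[
\theta^{R'}(M,N)(n) = \bigl[\ell(\Tor_{2n}^R(M,N)) - \ell(\Tor_{2n-1}^R(M,N))\bigr] - \bigl[\ell(\Tor_{2(n-1)}^R(M,N)) - \ell(\Tor_{2(n-1)-1}^R(M,N))\bigr].
\]
By \eqref{theta_function} this is precisely $\theta^R(M,N)(n) - \theta^R(M,N)(n-1) = \nabla\theta^R(M,N)(n)$ for all $n\gg 0$.

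Finally, applying the operator $\nabla^{r-1}$ to both sides of the equality $\theta^{R'}(M,N)(n) = \nabla\theta^R(M,N)(n)$, which holds on all sufficiently large $n$, and using $\nabla^{r-1}\circ\nabla = \nabla^r$, yields
\[
\nabla^{r-1}\theta^{R'}(M,N)(n) = \nabla^r\theta^R(M,N)(n) \qquad \text{for } n\gg 0,
\]
as required. There is no real obstacle here — the only thing to be careful about is the book-keeping around "for all large $n$," since the hypothesis only provides the short exact sequence in that range; but $\nabla$ and its iterates only shift the starting index by a bounded amount, so the statement survives intact.
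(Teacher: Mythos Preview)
Your proof is correct and rests on the same length identity as the paper's, but your organization is cleaner: you first establish the base relation $\theta^{R'}(M,N)(n)=\nabla\theta^R(M,N)(n)$ for $n\gg 0$ and then simply apply $\nabla^{r-1}$, whereas the paper expands $\nabla^{r-1}\theta^{R'}(M,N)(n)$ via the explicit binomial formula \eqref{nabr}, substitutes the length identity term by term, reindexes, and invokes Pascal's rule $\binom{r-1}{i}+\binom{r-1}{i-1}=\binom{r}{i}$ to arrive at $\nabla^r\theta^R(M,N)(n)$. The two arguments are equivalent in content, but yours avoids the binomial bookkeeping entirely by isolating the single step where the change of rings interacts with $\nabla$; your closing remark about $\nabla^{r-1}$ shifting the threshold for ``large $n$'' by only a bounded amount is exactly the care needed, and the paper's computation implicitly handles this in the same way.
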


\begin{proof} Abbreviate $\ell(\Tor^R_i(M,N))$ by $\ell_i^R$.  Then, using \eqref{nabr}, 
for all $n\gg 0$ we have
\begin{align*}
\nabla^{r-1}&\theta^{R'}(M,N)(n)=\sum_{i=0}^{r-1}(-1)^i\tbinom{r-1}i 
\left(\ell_{2(n-i)}^{R'}-\ell_{2(n-i)-1}^{R'}\right)\\
=&\sum_{i=0}^{r-1}(-1)^i\tbinom{r-1}i\ell_{2(n-i)}^{R'}-\sum_{i=0}^{r-1}(-1)^i\tbinom{r-1}i\ell_{2(n-i)-1}^{R'}\\
=&\sum_{i=0}^{r-1}(-1)^i\tbinom{r-1}i\left(\ell_{2(n-i)}^{R}-\ell_{2(n-i)-2}^R\right)-\sum_{i=0}^{r-1}(-1)^i\tbinom{r-1} i\left(\ell_{2(n-i)-1}^R-\ell_{2(n-i)-3}^R\right)\\
\intertext{Now we exchange the second term in parentheses in the first sum with the first term in parentheses in the second sum, getting}
=&\sum_{i=0}^{r-1}(-1)^i\tbinom{r-1} i\left(\ell_{2(n-i)}^{R}-\ell_{2(n-i)-1}^R\right)-\sum_{i=0}^{r-1}(-1)^i\tbinom{r-1} i\left(\ell_{2(n-i)-2}^R-\ell_{2(n-i)-3}^R\right)\\
=&\sum_{i=0}^{r-1}(-1)^i\tbinom{r-1} i\left(\ell_{2(n-i)}^{R}-\ell_{2(n-i)-1}^R\right)-\sum_{i=1}^r(-1)^{i-1}\tbinom{r-1}{i-1}\left(\ell_{2(n-i)}^R-\ell_{2(n-i)-1}^R\right)\\
=&\sum_{i=0}^r(-1)^i\left(\tbinom{r-1}i+\tbinom{r-1}{i-1}\right)
\left(\ell_{2(n-i)}^R-\ell_{2(n-i)-1}^R\right)\\
=&\sum_{i=0}^r(-1)^i\tbinom ri\left(\ell_{2(n-i)}^R-\ell_{2(n-i)-1}^R\right)\\
=&\nabla^r\theta^R(M,N)(n)
\end{align*}
\end{proof}

\begin{chunk}\label{injsurj} Let $G$ be a graded $\R$-module. It follows immediately from the exactness of $D$ that multiplication by a homogeneous element $x\in\R$ is eventually surjective on $G$, meaning $G_n\xra{x} G_{n+|x|}\to 0$ is exact for all $n\gg 0$, if and only if multiplication by $x$ on $D(G)$ is eventually injective: $0\to D(G)_n \xra{x} D(G)_{n+|x|}$ is exact for all $n\ll 0$. 
\end{chunk}

The following is a key result that allows for induction by passing to a principal lifting; it is a slight generalization of \cite[Theorem 3.1]{Eis80}.

\begin{lemma}\label{coreg} 
Assume that the residue field $k$ is infinite. Let $M$ and $N$ be finitely generated $R$-modules with $\ell(M\otimes_R N)$ finite and $\Tor^Q_n(M,N)=0$ for all $n\gg 0$. Consider the graded $\R$-module $T=\bigoplus_{n\ge 0}T_n$, where 
$T_n=\Tor^R_n(M,N)$. Then there exists a linear form $x\in \R_{-2}$ such that $T_{n-2}=xT_n$ 
for all $n\gg 0$.
\end{lemma}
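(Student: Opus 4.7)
The plan is to pass to the Matlis dual and reduce a surjectivity question about the Artinian module $T$ to an injectivity question on a Noetherian module, which can then be attacked by the standard combination of local cohomology, Nakayama, and prime avoidance. Explicitly, set $S := D(T)$. By Assumptions \ref{Gu}, $T$ is Artinian, so by Theorem \ref{ArtNoeth} $S$ is Noetherian; since $T$ is supported in degrees $\geq 0$, $S$ is supported in degrees $\leq 0$. By \ref{injsurj}, the desired conclusion $T_{n-2} = xT_n$ for $n \gg 0$ is equivalent to multiplication by $x$ being injective on $S$ in all sufficiently negative degrees, so I would focus on producing a linear form $x \in \R_{-2}$ with this property.

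Next, I would isolate the piece of $S$ responsible for any failure of injectivity. Let $\R_+ = (x_1,\dots,x_c) \subseteq \R$ and set $H := H^0_{\R_+}(S)$, the submodule of elements annihilated by some power of $\R_+$. Because $S$ is Noetherian, $H$ is finitely generated and $\R_+^N H = 0$ for a single $N$. Using that $\R_{-2m} \subseteq \R_+^m$ for every $m \geq 0$ and that $H$ lies in degrees $\leq 0$, a short degree count shows that $H_n = 0$ for all $n$ sufficiently negative. Therefore $S_n = (S/H)_n$ for $n \ll 0$, and it suffices to produce $x$ that is a nonzerodivisor on $U := S/H$, since then injectivity on $U$ in every degree will give injectivity on $S$ for $n \ll 0$.

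For the final ingredient I would invoke prime avoidance. By construction no associated prime of $U$ contains $\R_+$, and there are only finitely many such primes. For each associated prime $\p$ of $U$, the intersection $\p \cap \R_{-2}$ is a proper $R$-submodule of the free $R$-module $\R_{-2} \cong R^c$ (because $\p \not\supseteq \R_+$ is equivalent to $\p \not\supseteq \R_{-2}$), so Nakayama forces its image in $\R_{-2}/\fm \R_{-2} \cong k^c$ to be a proper $k$-subspace. Since $k$ is infinite, finitely many proper subspaces cannot cover $k^c$, so a suitable linear form $x \in \R_{-2}$ escapes every associated prime of $U$ and is thus a nonzerodivisor on $U$. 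Combined with the previous paragraph and \ref{injsurj}, this yields the lemma.

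I expect the only nonroutine part to be the careful bookkeeping of grading conventions—navigating the $-2$ grading on $\R$, the sign reversal induced by $D$, and the standard positively graded setting in which Nakayama and prime avoidance are usually phrased. Once these are pinned down, each step reduces to a standard application of local algebra.
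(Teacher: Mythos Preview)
Your proposal is correct and follows essentially the same route as the paper: dualize $T$ to the Noetherian module $D(T)$ via Theorem~\ref{ArtNoeth}, produce a linear form $x\in\R_{-2}$ that is eventually regular on $D(T)$ by a prime-avoidance argument over the infinite residue field, and conclude eventual surjectivity on $T$ via \ref{injsurj}. The only cosmetic difference is packaging: the paper outsources the ``eventually regular element'' step to Theorem~\ref{eres} in the appendix, which uses a primary decomposition of $0$ in $D(T)$ to separate the primes containing $\R_{-2}$ from those that do not, whereas you achieve the same separation by passing to $U=S/H^0_{\R_+}(S)$; in both cases the finish is identical (Nakayama on $\R_{-2}/\fm\R_{-2}\cong k^c$ plus the fact that $k^c$ is not a finite union of proper subspaces).
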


\begin{proof} By \ref{Gu} we have that $T$ is an Artinian $\R$-module. Therefore by Theorem 
\ref{ArtNoeth}, $D(T)$ is a Noetherian $\R$-module. Now by Theorem \ref{eres} in the appendix, there exists $x\in \R_{-2}$ such that $x$ is eventually regular on $D(T)$, that is, multiplication by $x$ is eventually injective.  Thus $x$ is eventually surjective on $T$, by \ref{injsurj}.
\end{proof}

\begin{theorem}\label{redcx} 
Assume that the residue field $k$ is infinite. Let $M_1,\dots,M_l$ and $N_1,\dots,N_m$ be finitely generated $R$-modules such that $\ell(M_i\otimes_RN_j)$ is finite and 
$\Tor^Q_n(M_i,N_j)=0$ for all $n\gg 0$ and all $i,j$.  Then there exists a principal lifting $R'$ of $R$ such that the following hold for all $i,j$:
\begin{enumerate}
\item For all $n\gg0$, there are exact sequences:
\[
0\to \Tor_n^{R'}(M_i,N_j) \to \Tor_n^R(M_i,N_j) \to \Tor_{n-2}^R(M_i,N_j) \to 0\;.
\]
\item If $\cx^R(M_i,N_j)\ge 1$, then 
\[
\cx^{R'}(M_i,N_j)=\cx^R(M_i,N_j)-1\;.
\] 
\item For $s\ge\cx^R(M_i,N_j)$,
\[
\theta_{s-1}^{R'}(M_i,N_j)=\theta_s^R(M_i,N_j)\;.
\]
\end{enumerate}
\end{theorem}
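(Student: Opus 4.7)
The plan is to produce a single principal lifting $R'$ of $R$ that simultaneously controls every pair $(M_i,N_j)$, then read off (1)--(3) from an Eisenbud-type short exact sequence together with Lemma~\ref{theta}. Part~(1) is the real content; (2) and (3) will be formal consequences.

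First I would upgrade Lemma~\ref{coreg} to a multi-pair statement. The direct sum $\bigoplus_{i,j}\Tor^R(M_i,N_j)$ is an Artinian graded $\R$-module by Assumptions~\ref{Gu}, so its graded Matlis dual is Noetherian by Theorem~\ref{ArtNoeth}. Theorem~\ref{eres} of the appendix then produces a linear form $x\in\R_{-2}$ that is eventually regular on this dual, and \ref{injsurj} translates this into eventual surjectivity of $x$ on each summand $\Tor^R(M_i,N_j)$. Since $k$ is infinite and the set of good linear forms is a nonempty Zariski-open subset of $\R_{-2}\cong R^c$, I may write $x=\sum_{i=1}^c\lambda_i x_i$ with $\lambda_i\in R$ and at least one $\lambda_i$ (say $\lambda_1$, after permuting) a unit.

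Next I would convert $x$ into a principal lifting. Setting $g_c=\sum_{i=1}^c\lambda_i f_i$ and $g_j=f_{j+1}$ for $1\le j\le c-1$, the sequence $g_1,\dots,g_c$ still generates the ideal $(f_1,\dots,f_c)$ and is $Q$-regular, so $R':=Q/(g_1,\dots,g_{c-1})$ is a principal lifting with $R=R'/(g_c)$. Under this $Q$-linear change of the defining regular sequence, the Eisenbud operator on $\Tor^R(M_i,N_j)$ attached to $g_c$ is precisely $x$. The change-of-rings long exact sequence for the surjection $R'\twoheadrightarrow R=R'/(g_c)$ with $g_c$ an $R'$-regular element then reads
\[
\cdots\to\Tor_n^{R'}(M_i,N_j)\to\Tor_n^R(M_i,N_j)\xrightarrow{\,x\,}\Tor_{n-2}^R(M_i,N_j)\to\Tor_{n-1}^{R'}(M_i,N_j)\to\cdots
\]
Because $x$ is eventually surjective, the connecting maps $\Tor_{n-1}^R\to\Tor_n^{R'}$ and $\Tor_{n-2}^R\to\Tor_{n-1}^{R'}$ vanish for $n\gg 0$, and the long sequence breaks into exactly the short exact sequences claimed in~(1).

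From (1), the remaining parts are routine. Taking lengths gives $\ell(\Tor_n^{R'}(M_i,N_j))=\ell(\Tor_n^R(M_i,N_j))-\ell(\Tor_{n-2}^R(M_i,N_j))$ for $n\gg 0$, so the even and odd Hilbert polynomials over $R'$ are the first differences of those over $R$; this drops the degree by exactly one and yields~(2). For~(3), Lemma~\ref{theta} applied with $r=s-1$ gives $\theta_{s-1}^{R'}(M_i,N_j)=\theta_s^R(M_i,N_j)$. I expect the main obstacle to be the compatibility verification hidden in the third paragraph: one must check that the Gulliksen $\R$-action used to produce $x$ transforms correctly under the $Q$-linear change of regular sequence and, after placing $g_c$ last, agrees with the Eisenbud connecting operator in the change-of-rings long exact sequence. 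The multi-pair genericity step is comparatively mild, since it uses only that $k$ is infinite and that the locus of suitable linear forms is a nonempty Zariski-open subset of $\R_{-2}$.
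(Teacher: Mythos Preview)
Your proposal is correct and follows essentially the same route as the paper: produce an eventually surjective linear form $x\in\R_{-2}$ on the direct sum of all the Tor modules (the paper does this via $\Tor^R(\bigoplus_i M_i,\bigoplus_j N_j)$, which is the same module), convert $x$ into a principal lifting via the appendix material \ref{pls} and \ref{eisops}, and then read off (1)--(3) from the change-of-rings long exact sequence and Lemma~\ref{theta}. The compatibility check you flag as the main obstacle is exactly what \ref{eisops} supplies, so your instinct there is right.
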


\begin{proof}  
By Lemma \ref{coreg} there exists a homogeneous element $x\in\mathcal R$ of 
degree $-2$ which is eventually surjective on 
$\Tor^R(\oplus_{i=1}^lM_i,\oplus_{j=1}^mN_j)$.  This implies that $x$ is eventually surjective on each summand $\Tor^R(M_i,N_j)$.
By \ref{pls} and \ref{eisops} in the appendix, there exists a principal lifting $R'$ of $R$ and change of rings long exact sequences:
\[
\cdots \to \Tor^{R'}_n(M_i,N_j)\to\Tor^R_n(M_i,N_j)\xrightarrow{x}\Tor^R_{n-2}(M_i,N_j) \to \cdots
\]
which eventually break up into short exact sequences for all $i,j$, since multiplication by $x$ is eventually surjective. This proves (1). 

Now set $M=M_i$ and $N=N_j$.  The short exact sequences from (1) yield the equations
\[
H_e^{R'}(M,N)(n)=H_e^R(M,N)(n)-H_e^R(M,N)(n-1)=\nabla H_e^R(M,N)(n)
\]
and
\[
H_o^{R'}(M,N)(n)=H_o^R(M,N)(n)-H_o^R(M,N)(n-1)=\nabla H_o^R(M,N)(n)
\]
for all $n\gg 0$. Thus $\deg P_e^{R',M,N}(x)=\deg P_e^{R,M,N}(x)-1$ and
$\deg P_o^{R',M,N}(x)=\deg P_o^{R,M,N}(x)-1$.  This establishes (2), and (3) follows from Lemma \ref{theta}.
\end{proof}

\begin{remark}\label{infk}
One may always find a faithfully flat ring extension $Q\to \widetilde{Q}$ where $\widetilde{Q}$ has an infinite residue field, see for example the proof of \cite[Proposition 2.3]{J2}. Taking $\widetilde{R}=\widetilde{Q}/(\widetilde{f_1},...,\widetilde{f_c})$, where $\widetilde{f_i}$ is the image of $f_i$ in $\widetilde{Q}$, we obtain a faithfully flat ring extension $R\to \widetilde{R}$. In this process, $\widetilde{Q}$ is again a regular local ring, $\widetilde{f_1},...,\widetilde{f_c}$ is a regular sequence in $\widetilde{Q}$, and hence $\widetilde{R}$ is a complete intersection. If $M$ and $N$ are finitely generated $R$-modules, then the following hold: 
\begin{itemize}
\item $\Tor_n^{\widetilde{R}}(M\otimes_R \widetilde{R},N\otimes_R \widetilde{R})\cong \Tor_n^R(M,N)\otimes_R \widetilde{R}$
\item $\dim_R M=\dim_{\widetilde{R}}M\otimes_R \widetilde{R}$
\item $\ell_R(M)=\ell_{\widetilde{R}}(M\otimes_R \widetilde{R})$
\item $\depth_R M=\depth_{\widetilde{R}}M\otimes_R \widetilde{R}$
\item $\cx_R(M,N)=\cx_{\widetilde{R}}(M\otimes_R \widetilde{R},N\otimes_R\widetilde{R})$
\item $\theta_s^R(M,N)=\theta_s^{\widetilde{R}}(M\otimes_R\widetilde{R},N\otimes_R\widetilde{R})$.
\end{itemize}
\end{remark}

\subsection*{An extension of Hochster's result}
In this subsection, we now extend Hochster's result from hypersurfaces to complete intersections of arbitrary codimension. 

Recall that Serre's Euler characteristic $\chi^R(M,N)$ for intersection multiplicities in the case where $\ell (M\otimes_R N)<\infty$ and $\cx^R(M,N)=0$ is given by
\[
\chi^R(M,N)=\sum_{i\ge 0}(-1)^i\ell(\Tor_i^R(M,N))\;.
\]

\begin{lemma}\label{chi=theta} 
Let $M$ and $N$ be finitely generated $R$-modules with $\ell(M\otimes_R N)$ finite and 
$\Tor^Q_n(M,N)=0$ for all $n\gg 0$. 
Suppose that $\cx^R(M,N)\le 1$. Then for any principal lifting $R'$ of $R$ for which $\cx^{R'}(M,N)=0$ we have
\[
\chi^{R'}(M,N)=\theta^R_1(M,N)\;.
\]
In particular, if $\cx^R(M,N)=0$, then $\chi^{R'}(M,N)=0$.
\end{lemma}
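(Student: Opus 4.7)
The plan is to invoke the Eisenbud change-of-rings long exact sequence
\[
\cdots \to \Tor_n^{R'}(M,N) \to \Tor_n^R(M,N) \xrightarrow{x} \Tor_{n-2}^R(M,N) \to \Tor_{n-1}^{R'}(M,N) \to \cdots
\]
associated with the principal lifting $R = R'/(g_c)$, where $x$ is the Eisenbud operator for $g_c$. The hypothesis $\cx^{R'}(M,N) = 0$ forces $\Tor_n^{R'}(M,N) = 0$ for all $n \gg 0$, so $\chi^{R'}(M,N)$ is a well-defined finite sum. Reading the long exact sequence in that range will show that $x$ is an isomorphism in all sufficiently large degrees, so $\ell(\Tor_n^R(M,N)) = \ell(\Tor_{n-2}^R(M,N))$ for $n \gg 0$.

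Next, I will fix an even integer $N = 2K$ large enough that $\Tor_n^{R'}(M,N) = 0$ and the Eisenbud operator $x\col \Tor_n^R(M,N) \to \Tor_{n-2}^R(M,N)$ is an isomorphism for every $n \ge N$. The long exact sequence then splits off an uninformative tail at the isomorphism in degree $N$, leaving a bounded exact sequence whose interior consists of the repeating ``blocks'' $\Tor_n^{R'}(M,N) \to \Tor_n^R(M,N) \to \Tor_{n-2}^R(M,N)$ for $n = N-1, N-2, \dots, 0$. Its alternating sum of lengths vanishes by exactness; after reindexing the $\Tor_{n-2}^R$ terms by a shift of $2$, the $\Tor^R$-contributions telescope, leaving only the boundary terms at indices $N - 2$ and $N - 1$. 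The outcome will be
\[
\chi^{R'}(M,N) \;=\; \ell(\Tor_{N-2}^R(M,N)) - \ell(\Tor_{N-1}^R(M,N)).
\]

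By the eventual periodicity, $\ell(\Tor_{N-2}^R(M,N)) = \ell(\Tor_{2K}^R(M,N))$, so the right-hand side is precisely $\theta^R(M,N)(K)$, which by Proposition \ref{prop1}(2) and Definition \ref{theta_dfn} equals $\theta_1^R(M,N)$ in the case $\cx^R(M,N) = 1$. If instead $\cx^R(M,N) = 0$, both Hilbert polynomials vanish, so both boundary lengths are zero for $K \gg 0$ and $\chi^{R'}(M,N) = 0$. The main technical obstacle will be the sign and index bookkeeping when extracting $\chi^{R'}(M,N)$ from the truncated long exact sequence; once that is set up carefully, the cancellations are essentially formal.
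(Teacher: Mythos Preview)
Your proposal is correct and follows essentially the same route as the paper: both truncate the change-of-rings long exact sequence at a point where $\Tor^{R'}_n(M,N)$ vanishes, take the alternating sum of lengths, and telescope to isolate $\chi^{R'}(M,N)$ against a single difference $\ell\Tor^R_{\text{even}}-\ell\Tor^R_{\text{odd}}$ in high degree, which is then identified with $\theta_1^R(M,N)$ via eventual periodicity. The only cosmetic difference is that the paper truncates exactly at $q+1$ with $q=\sup\{i:\Tor_i^{R'}(M,N)\neq 0\}$ and splits into cases on the parity of $q$, whereas you truncate at a large even $N=2K$ and thereby sidestep the parity split; also, your block index should run over $n=N-1,\dots,1$ rather than down to $0$, since the sequence terminates at $\Tor_1^R(M,N)\to 0$, but this does not affect the outcome.
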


\begin{proof} For short, write $T^R_n$ for $\Tor_n^R(M,N)$.  We have 
$R=R'/(f)$ and the change of rings long exact sequence of Tor \eqref{corles}
\begin{align*}
\cdots\to T^R_{p+1} \to T^R_{p-1} \to T^{R'}_p \to \cdots&\\ 
\to T^R_3 \to& T^R_1\to T^{R'}_2 \to T^R_2 \to T^R_0 \to T^{R'}_1 \to T^R_1 \to 0
\end{align*}
Since $\cx^{R'}(M,N)=0$, we have $T^{R'}_n=0$ for all $n\gg 0$. Let 
$q=\sup\{i\mid T^{R'}_i\ne 0\}$. It follows from the long exact 
sequence that $T^R_{q+2n}=T^R_q$ and $T^R_{q+1+2n}=T^R_{q+1}$
for all $n\ge 0$. If we truncate the long exact sequence at 
$T^{R'}_{q+1}=0$, then the alternating sum of lengths of terms equates to 0:
\begin{align*}
(-1)^q\ell T_{q+1}^R+\sum_{i=1}^q(-1)^{i-1}(\ell T^R_{i-1}-\ell T^{R'}_i+\ell T^R_i)=0
\end{align*}
Since the sum is mostly telescoping, it is reduced to 
\begin{equation}\label{q}
\left(\sum_{i=0}^q(-1)^i\ell T^{R'}_i\right) - 
(-1)^q(\ell T^R_q-\ell T^R_{q+1})=0
\end{equation}
Note that if $\cx^R(M,N)=0$, then $\sup\{i\mid T^R_i\ne 0\}=q-1$. If $q$ is even then $(-1)^q(\ell T^R_q-\ell T^R_{q+1})=
\ell T^R_{q+2}-\ell T^R_{q+1} = \theta_1^R(M,N)$.  If $q$ is odd 
then $(-1)^q(\ell T^R_q-\ell T^R_{q+1})=
\ell T^R_{q+1}-\ell T^R_q = \theta_1^R(M,N)$. Thus \eqref{q} says
\[
\chi^{R'}(M,N)-\theta^R_1(M,N)=0
\]
as desired. 

The last statement is obvious from the long exact sequence of Tor above.
\end{proof}

Lemma \ref{chi=theta} validates defining $\theta_0^R(M,N)=\chi^R(M,N)$ when 
$\cx^R(M,N)=0$.  Thus we make this so.

\begin{chunk}\label{SerreIMC}
Recall Serre's intersection multiplicity conjectures for finitely generated modules $M$ and $N$ over a regular local ring $Q$, satisfying $\ell(M\otimes_QN)<\infty$:
\begin{enumerate}
\item Dimension inequality: $\dim M+\dim N\le\dim Q$.
\item Non-negativity: $\chi^Q(M,N)\geq 0$.
\item Vanishing: If $\dim M+\dim N<\dim Q$, then $\chi^Q(M,N)=0$.
\item Positivity: If $\dim M+\dim N=\dim Q$, then $\chi^Q(M,N)>0$.
\end{enumerate}
Serre proved (1)-(4) hold in the case where the completion $\widehat{R}$ is a formal power series over a field or DVR, and showed that (1) holds in general \cite{Serre}. Gabber (see \cite{Rob98Gab}) settled (2), and (3) was shown by Roberts \cite{Rob85} and Gillet-Soul\'{e} \cite{GS87} independently. The positivity conjecture (4) is still open in general.
\end{chunk}

One of the main results of this section is the following extension of Hochster's Theorem 
\cite[Theorem 1.4]{Hoc81}. By saying that $R$ is an admissible complete intersection we mean that $R$ is a complete intersection and that $Q$ satisfies the conclusions of Serre's intersection multiplicity conjectures 
\ref{SerreIMC}.

\begin{theorem}\label{HochsterCI} Let $R$ be an admissible complete intersection of codimension $c$, and let $M$ and $N$ be finitely generated $R$-modules with $\ell(M\otimes_R N)<\infty$. The following are then equivalent: 
 \begin{enumerate}[\quad\rm(1)]
 \item $\theta_c^R(M,N)=0$;
 \item $\dim M+\dim N<\dim R+c$. 
 \end{enumerate}
\end{theorem}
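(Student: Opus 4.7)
My plan is to prove the equivalence by induction on the codimension $c$, with Theorem \ref{redcx} serving as the reduction tool and the base case delivered by Serre's intersection multiplicity conjectures via admissibility.

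For the base case $c=0$, the ring $R$ is $Q$ itself, which is an admissible regular local ring. By the convention established just after Lemma \ref{chi=theta}, we have $\theta_0^R(M,N)=\chi^R(M,N)$, so the equivalence $\theta_0^R(M,N)=0 \Longleftrightarrow \dim M+\dim N<\dim R$ is precisely the combination of Serre's vanishing and positivity conjectures (parts (3) and (4) of \ref{SerreIMC}) together with the dimension inequality (1), all of which hold by admissibility.

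For the inductive step with $c\ge 1$, I first reduce to the case of an infinite residue field via Remark \ref{infk}, noting that $\dim$, $\cx^R$, and $\theta_c^R$ are all preserved under the faithfully flat extension indicated there. Next I apply Theorem \ref{redcx} to produce a principal lifting $R'=Q/(g_1,\dots,g_{c-1})$ of $R$; this $R'$ is itself an admissible complete intersection of codimension $c-1$, since it is built from the same regular ambient $Q$. Because $\cx^R(M,N)\le c$ in general (the Avramov--Buchweitz bound, given here as Theorem \ref{AvrBuc}), the hypothesis $s\ge \cx^R(M,N)$ in Theorem \ref{redcx}(3) is satisfied by $s=c$, yielding the transfer identity $\theta_{c-1}^{R'}(M,N)=\theta_c^R(M,N)$. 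The inductive hypothesis applied to $R'$ then gives $\theta_{c-1}^{R'}(M,N)=0$ if and only if $\dim_{R'}M+\dim_{R'}N<\dim R'+(c-1)$. Since $R=R'/(g_c)$ with $g_c$ an $R'$-regular element that annihilates every $R$-module, we have $\dim R'=\dim R+1$, $\dim_{R'}M=\dim_R M$, and $\dim_{R'}N=\dim_R N$; substituting converts the inequality over $R'$ into the desired one over $R$.

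The main potential obstacle is handling the case $\cx^R(M,N)<c$ uniformly: there $\theta_c^R(M,N)$ vanishes automatically, and we must still extract the strict dimension inequality. However, the transfer identity in Theorem \ref{redcx}(3) only demands $s\ge \cx^R(M,N)$, so $s=c$ remains admissible (with both sides zero), and the induction hypothesis for $R'$ forces $\dim M+\dim N<\dim R+c$ without any separate argument. Thus the delicate homological bookkeeping is absorbed into Theorem \ref{redcx}, and the proof here reduces to this induction plus the dimension-preserving identifications along $R'\twoheadrightarrow R$.
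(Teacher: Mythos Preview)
Your proof is correct and follows essentially the same inductive approach as the paper, with the cosmetic difference that you start the induction at $c=0$ rather than $c=1$. One correction: the bound $\cx^R(M,N)\le c$ is not Theorem~\ref{AvrBuc} (which gives the \emph{lower} bound $\cx^R(M,N)\ge\cx^RM+\cx^RN-c$); it follows instead from the observation, recorded just before Definition~\ref{dfn_cx}, that the even and odd Hilbert polynomials have degree at most $c-1$.
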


\begin{proof} 
We induct on $c$.  For $c=1$, the statement is just Hochster's result on admissible hypersurfaces. We include the proof for completeness. We have $R=Q/(f)$, and because $\cx^Q(M,N)=0$, we can apply Lemma 
\ref{chi=theta} to conclude that $\chi^Q(M,N)$ and $\theta_1^R(M,N)$ vanish simultaneously. By Serre's theorem, the vanishing of $\chi^Q(M,N)$ is equivalent to 
$\dim M +\dim N<\dim Q=\dim R+1$, and we are done in this case.

Now assume that $c>1$. By Remark \ref{infk}, we may assume that $R$ has an infinite residue field. Suppose that $\cx^R(M,N)\ge 1$.  Then by Theorem \ref{redcx} there exists a principal lifting $R'$ of $R$ such that $\cx^{R'}(M,N)=\cx^R(M,N)-1$, and so $\theta^{R'}_{c-1}(M,N)=\theta^R_c(M,N)$, regardless of whether $\cx^R(M,N)=c$ or $\cx^R(M,N)<c$. Now if $\cx^R(M,N)=0$, then $\cx^{R'}(M,N)=0$ for any principal lifting $R'$ of $R$.  Thus by definition, 
$\theta_{c-1}^{R'}(M,N)=\theta^R_c(M,N)=0$. Thus in any case, the vanishing of $\theta^R_c(M,N)$
is equivalent to the vanishing of $\theta^{R'}_{c-1}(M,N)$, and by induction this is equivalent to 
$\dim M +\dim N<\dim R' +c-1=\dim R +c$.
\end{proof}

\begin{remark}\label{theta_serre}
Since $\dim Q=\dim R+c$, a natural higher codimension analogue of Serre's dimension inequality (1) above holds: for finitely generated $R$-modules $M$ and $N$ satisfying $\ell(M\otimes_R N)<\infty$, one has 
\begin{equation}\label{eqn_dim}\dim M + \dim N \leq \dim R+ c\;.\end{equation}
This theorem therefore provides higher codimension analogues of Serre's conjectures (3) and (4). Note, however, that the naive analogue of (2) fails: $\theta_r^R(M,N)$ can be negative, see Section \ref{sec_highercodim}.
\end{remark}

\begin{remark}\label{rmk_compare}
In the case that $R$ is a hypersurface, i.e., a complete intersection with $c=1$, our invariant 
$\theta_1^R(M,N)$ in Definition \ref{theta_dfn} is precisely Hochster's theta invariant $\theta^R(M,N)$ from \cite{Hoc81}, which has been studied in depth by others. 

For a complete intersection $R$ more generally, and a pair of $R$-modules $M,N$ with $r=\cx^R(M,N)$ and 
$s\geq r$ an integer, our invariant $\theta_s^R(M,N)$ and Dao's invariant $\eta_s^R(M,N)$ from 
\cite{Dao07} vanish simultaneously. In fact, they satisfy 
$$(2^s\cdot s!)\eta_s(M,N)=\theta_s^R(M,N)$$ 
for $s\ge r\geq 0$: Indeed, when $s>r$, both invariants vanish. When $s=r$, this equality follows from induction on $r$: for $r=0$, one has $\theta_0^R(M,N)=\chi^R(M,N)=\eta_0^R(M,N)$, and for $r\geq 1$, without loss of generality we may assume that $R$ has an infinite residue field (see Remark \ref{infk}), so there is a principal lifting $R'$ of $R$ by Theorem \ref{redcx} so that induction, along with the fact that $\eta_{r-1}^{R'}(M,N)=2r\eta_r^R(M,N)$ per \cite[Theorem 4.2]{Dao07}, yields
$$\theta_r^R(M,N)=\theta_{r-1}^{R'}(M,N)=2^{r-1}(r-1)!\eta_{r-1}^{R'}(M,N)=2^rr!\eta_r^R(M,N)\;.$$

Dao's invariant is defined in terms of limits. The invariant $\eta_c(M,N)$ defined (only for the codimension $c$) in \cite{MPSW2} agrees with Dao's invariant, yet, like our invariant, is defined in terms of the $(c-1)$-st difference of Hochster's theta invariant.
\end{remark}

\subsection*{Complexity}

We state some properties of complexity.  Analogous results for ordinary Ext complexity are known.  Our proofs for length Tor complexity are totally different.

The next two results are straightforward to show for Ext complexity using the theory of support varieties, but the proofs here we give for Tor length complexity are different.
For example, the following inequality can also be deduced from \cite[Corollary 5.7]{AB} and \cite[Theorem 5.4]{Dao07}, but we include a proof here since it is short and fundamentally different.

\begin{theorem}\label{AvrBuc} 
Assume that $M$ and $N$ are finitely generated $R$-modules with 
$\ell(M\otimes_RN)<\infty$ and $\Tor^Q_n(M,N)=\Tor^Q_n(M,k)=\Tor^Q_n(N,k)=0$ for all $n\gg 0$. Then the inequality
\[
\cx^R(M,N)\ge \cx^RM+\cx^RN-c
\]
holds.
\end{theorem}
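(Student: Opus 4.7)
Proof plan. By Remark \ref{infk} we may assume $k$ is infinite; then we induct on $c$. The base case $c = 0$ is trivial, since $R = Q$ is regular and every complexity vanishes.

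For the inductive step $c \geq 1$, apply Theorem \ref{redcx} to the collection $\{M, N, k\}$, obtaining a principal lifting $R'$ of codimension $c - 1$ satisfying $\cx^{R'}(X, Y) = \max(\cx^R(X, Y) - 1, 0)$ for each of the pairs $(X, Y) \in \{(M, N), (M, k), (N, k)\}$. Write $r = \cx^R(M, N)$, $a = \cx^R M$, $b = \cx^R N$. The inductive hypothesis applied to $(M, N)$ over $R'$ reads $\max(r - 1, 0) \geq \max(a - 1, 0) + \max(b - 1, 0) - (c - 1)$. Three of the four scenarios are immediate: when $r, a, b$ are all positive this rearranges to $r \geq a + b - c$; when $a = 0$ (or symmetrically $b = 0$), the module $M$ has finite projective dimension so $r = 0$, and the desired inequality reduces to $b \leq c$ (respectively $a \leq c$). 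This last bound follows from iterating Theorem \ref{redcx} exactly $c$ times down to the regular ring $Q$: at each step any complexity drops by at most one, so every complexity over $R$ must be at most $c$.

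The remaining scenario, with $r = 0$ but $a, b \geq 1$, is the main obstacle: the uniform lifting above only yields $a + b \leq c + 1$, one short of what is needed. The plan is to replace the generic element $x \in \R_{-2}$ from Lemma \ref{coreg} by an asymmetric one, chosen so that $x$ acts eventually surjectively on $\Tor^R(M, k)$ but not on $\Tor^R(N, k)$. Such an $x$ exists by prime avoidance on the annihilators of $D(\Tor^R(M, k))$ and $D(\Tor^R(N, k))$ in $\R$; coincidence of these loci combined with $r = 0$ would force $a = b = 0$ via a support-intersection argument, contradicting the hypothesis. The resulting principal lifting $R''$ then has $\cx^{R''} M = a - 1$ (the SES of Theorem \ref{redcx}(1) for $(M, k)$ still holds), $\cx^{R''} N = b$ (the SES fails for $(N, k)$, and the cokernels of $x$ acting on $\Tor^R(N, k)$ contribute polynomially of degree $b - 1$ to $\Tor^{R''}(N, k)$ via the change-of-rings long exact sequence), while $\cx^{R''}(M, N) = 0$ persists unconditionally since vanishing of $\Tor^R(M, N)$ forces vanishing of $\Tor^{R''}(M, N)$ through the same long exact sequence. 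Applying the inductive hypothesis over $R''$ now gives $0 \geq (a - 1) + b - (c - 1)$, equivalent to $a + b \leq c$, completing the induction.

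The hard part is executing the asymmetric-lifting step: verifying, via a careful analysis of the Eisenbud operators and the dual modules $D(\Tor^R(M, k))$ and $D(\Tor^R(N, k))$, both the existence of the distinguishing element $x$ and the claim that $\cx^{R''} N$ does not drop when the relevant SES fails. All other cases are routine consequences of Theorem \ref{redcx} and the uniform complexity bound.
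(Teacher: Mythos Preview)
Your proposal has a genuine gap in the ``asymmetric-lifting'' step, which you yourself flag as the hard part but do not execute. Two claims there are unsupported and, as stated, appear circular.

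First, the existence of the distinguishing element $x$: you assert that if no $x\in\R_{-2}$ separates $\Tor^R(M,k)$ from $\Tor^R(N,k)$ in the way you need, then ``coincidence of these loci combined with $r=0$ would force $a=b=0$ via a support-intersection argument.'' But that support-intersection statement is essentially the Avramov--Buchweitz identity $V(M,N)=V(M)\cap V(N)$, and the dimension formula $\cx^R(M,N)=\dim V(M,N)$ is precisely the content being re-proved here in the Tor-length setting. Invoking it assumes what you are trying to show. Second, the claim that $\cx^{R''}N=b$ when the short exact sequence for $(N,k)$ fails is not justified: the change-of-rings long exact sequence \eqref{corles} expresses $\Tor^{R''}_n(N,k)$ as an extension involving both $\ker(x)$ and $\coker(x)$ on $\Tor^R(N,k)$, and without further control on the $x$-action you cannot conclude the complexity is preserved rather than merely bounded above by $b$.

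The paper avoids both difficulties by inducting on $r=\cx^R(M,N)$ rather than on $c$. The base case $r=0$ is then handled directly: after replacing $M$ by a high syzygy one has $\Tor^R_n(M,N)=0$ for $n>0$, so $F\otimes_RG$ is a minimal free resolution of $M\otimes_RN$, and a convolution argument on the even/odd Hilbert polynomials gives $\cx^R(M\otimes_RN)=\cx^RM+\cx^RN$; since $\cx^R(M\otimes_RN)\le c$, the inequality follows. The inductive step $r>0$ is then easy, since $r>0$ forces $a,b>0$ and Theorem~\ref{redcx} drops all three complexities by one simultaneously. This sidesteps entirely the problematic case $r=0$, $a,b\ge1$ that your induction on $c$ cannot avoid.
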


\begin{proof} We induct on $r=\cx^R(M,N)$.  Suppose $r=0$.  Since $\cx^R(M,N)=\cx^R(\Omega_iM,N)$ and $\cx^RM=\cx^R\Omega_iM$ for any $R$-syzygy $\Omega_iM$ of $M$ (note we also have 
$\Tor^Q_n(\Omega_iM,N\oplus k)=0$ for all $n\gg 0$), we can assume that $\Tor^R_n(M,N)=0$ for all $n>0$. In this case, if $F$ and $G$ are minimal free resolutions of $M$ and $N$, respectively, then $F\otimes_RG$ is a minimal free resolution of $M\otimes_R N$.  It follows that 
\[
P_e^{R,M\otimes N,k}(n)=\sum_{i\ge 0}P^{R,M,k}_e(i)P^{R,N,k}_e(n-i)
+\sum_{i\ge 1}P_o^{R,M,k}(i)P_o^{R,N,k}(n-i)
\] 
and 
\[
P_o^{R,M\otimes N,k}(n)=\sum_{i\ge 0}P^{R,M,k}_e(i)P^{R,N,k}_o(n-i)
+\sum_{i\ge 1}P_o^{R,M,k}(i)P_e^{R,N,k}(n-i)
\]  
Now using the well-known fact that if $F(n)$ is a numerical function of polynomial type of degree $d$ and $G(n)$ is a numerical function of polynomial type of degree $e$, then $(F*G)(n)=\sum_{i\ge 0}F(i)G(n-i)$ is a numerical function of polynomial type in $n$ of degree $d+e+1$, we see that $\cx^R(M\otimes_RN)=\cx^RM+\cx^RN$. Since $\cx^R(M\otimes_R N)\leq c$, the desired inequality holds in this case.

For the inductive step, assume now that $r>0$. Then, necessarily, $\cx^RM>0$ and $\cx^RN>0$.  By Remark \ref{infk}, we may assume that $R$ has an infinite residue field. We now employ Theorem \ref{redcx} to find a principal lifting $R'$ of $R$ such that 
$\cx^{R'}(M,N)=\cx^R(M,N)-1$, $\cx^{R'}M=\cx^RM-1$, and $\cx^{R'}N=\cx^RN-1$.  Thus by induction,
\begin{align*}
\cx^R(M,N)&=\cx^{R'}(M,N)+1\\
&\ge\cx^{R'}M+\cx^{R'}N-(c-1)+1\\
&=\cx^RM+\cx^RN-c 
\end{align*}
as claimed.
\end{proof}

\begin{theorem}\label{cxlec} 
Let $M$ and $N$ be finitely generated $R$-modules with $\ell(M\otimes_R N)$ finite and
$\Tor^Q_n(M,N\oplus k)=0$ for all $n\gg 0$. Then the following hold.
\begin{enumerate}
\item $\cx^R(M,N)\le \cx^RM$.
\item If $0\to M_1\to M_2\to M_3\to 0$ and $0\to N_1\to N_2\to N_3\to 0$ are short
exact sequences of finitely generated $R$-modules with $\ell(M_i\otimes_RN)$,
$\ell(M\otimes_RN_j)$ finite and $\Tor^Q_n(M\oplus M_i,N\oplus N_j)=0$ for all $i,j$ and $n\gg 0$, then for $\{h,i,j\}=\{1,2,3\}$ we have
\[
\cx^R(M_h,N)\leq\max\{\cx^R(M_i,N),\cx^R(M_j,N)\}
\]
and
\[
\cx^R(M,N_h)\leq\max\{\cx^R(M,N_i),\cx^R(M,N_j)\}
\]
\item If $\ell(M)$ is finite and $\Tor^Q_n(M,M\oplus k)=0$ for all $n\gg 0$, then
$\cx^R(M,M)=\cx^RM$.
\item If $R$ is a complete intersection and $M$ is a Cohen-Macaulay module of grade $g$, then 
$\cx^RM=\cx^R\Ext^g_R(M,R)$.
\end{enumerate}
\end{theorem}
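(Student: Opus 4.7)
My plan is to prove (1) and (3) by induction on $\cx^R M$ using the principal-lifting machinery of Theorem \ref{redcx}, (2) by a direct inspection of the Tor long exact sequence, and (4) by Gorenstein duality on maximal Cohen-Macaulay (MCM) modules.

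For (1), I would induct on $s = \cx^R M$. The base case $s = 0$ means $\pd_R M < \infty$, so $\Tor_n^R(M,N) = 0$ for $n \gg 0$ and hence $\cx^R(M,N) = 0$. For $s \geq 1$, invoke Remark \ref{infk} to reduce to the case of an infinite residue field, and apply Theorem \ref{redcx} to the two-pair family $\{(M,k),(M,N)\}$. If $\cx^R(M,N) = 0$ the conclusion is trivial; otherwise the resulting principal lifting $R'$ satisfies both $\cx^{R'}M = s - 1$ and $\cx^{R'}(M,N) = \cx^R(M,N) - 1$, so the inductive hypothesis gives $\cx^R(M,N) - 1 \leq s - 1$.

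For (2), the Tor long exact sequence associated to $0 \to M_1 \to M_2 \to M_3 \to 0$ bounds each $\ell(\Tor_n^R(M_h,N))$ by the sum of lengths of $\Tor_n^R$ or $\Tor_{n\pm 1}^R$ of the other two modules with $N$. A shift of index by one only swaps the roles of the even and odd Hilbert functions, so passing to the even/odd Hilbert polynomials yields $\cx^R(M_h,N) \leq \max\{\cx^R(M_i,N),\cx^R(M_j,N)\}$. The argument for the second slot is symmetric.

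For (3), the inequality $\cx^R(M,M) \leq \cx^R M$ is immediate from (1). For the converse, I would again induct on $s = \cx^R M$. The case $s = 0$ is immediate. For $s \geq 1$ with $\cx^R(M,M) \geq 1$, apply Theorem \ref{redcx} to $\{(M,M),(M,k)\}$ (after the infinite residue field reduction) to obtain a principal lifting $R'$ dropping both complexities by one, and conclude by the inductive hypothesis. The main obstacle is ruling out the remaining case $s \geq 1$ with $\cx^R(M,M) = 0$: the short exact sequence of Theorem \ref{redcx}(1) forces $\cx^{R'}(M,M) = 0$ for every principal lifting, so repeated application of the construction together with the inductive hypothesis would drive the situation down to $s = 1$. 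That edge case then requires a separate argument, either filtering $M$ by composition factors $k$ and tracking the vanishing of $\Tor_n^R(M,M)$ through the long exact sequence, or invoking a rigidity result over the complete intersection to deduce $\pd_R M < \infty$ from eventual vanishing of $\Tor_n^R(M,M)$.

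For (4), since a complete intersection is Gorenstein, the functor $(-)^* = \Hom_R(-,R)$ restricts to an exact duality on MCM modules that preserves complexity (via the symmetry of Betti numbers across a complete resolution). For $M$ Cohen-Macaulay of grade $g$, the syzygy $\Omega^g M$ is MCM, and dualizing the minimal free resolution of $M$ while using $\Ext_R^i(M,R) = 0$ for $i < g$ yields an exact sequence
\[
0 \to F_0^* \to F_1^* \to \cdots \to F_{g-1}^* \to (\Omega^g M)^* \to \Ext_R^g(M,R) \to 0.
\]
Splicing this into short exact sequences with free middle terms gives a stable isomorphism $\Ext_R^g(M,R) \cong (\Omega^g M)^*$ in the MCM stable category, so
\[
\cx^R \Ext_R^g(M,R) = \cx^R (\Omega^g M)^* = \cx^R \Omega^g M = \cx^R M,
\]
using successively that complexity ignores free summands, is preserved by MCM duality, and is invariant under syzygy.
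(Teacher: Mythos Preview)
Your arguments for (1) and (2) are fine; in fact your treatment of (2) via the long exact sequence is more direct than the paper's, which inducts on the maximum of the three complexities and again invokes Theorem \ref{redcx}.

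For (3) there is a genuine gap. You correctly isolate the obstruction as the case $\cx^RM\ge 1$ with $\cx^R(M,M)=0$, and your reduction to $\cx^RM=1$ is valid. But neither of your suggested fixes closes it. The filtration idea does not work: from a composition series $0=M_0\subset\cdots\subset M_\ell=M$ and the vanishing of $\Tor^R_n(M,M)$ for $n\gg 0$ you obtain, via the long exact sequences, only that $\ell(\Tor^R_n(M,k))$ is eventually nonincreasing, not that it vanishes; part (2) gives no traction either, since the inequalities it provides are circular here. The ``rigidity'' route is the right instinct, but it requires a specific nontrivial input. The paper supplies it as follows: assuming $\cx^R(M,M)=0$, \cite[Theorem 3.3]{BeJo} (a generalized Dade's Lemma) produces a residual algebraic closure $Q^\sharp$ and a nonzerodivisor $f$ with $\Tor^{Q^\sharp/(f)}_n(M^\sharp,M^\sharp)=0$ yet $\Tor^{Q^\sharp/(f)}_n(M^\sharp,k^\sharp)\ne 0$ for all $n\gg 0$; since $Q^\sharp/(f)$ has codimension one, Theorem \ref{AvrBuc} gives $0\ge 2\cdot 1-1$, a contradiction. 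In other words, the paper does not reduce to $s=1$ at all; it shows directly that $\cx^R(M,M)>0$ whenever $\cx^RM>0$.

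For (4) your approach is different from the paper's and rests on a claim you have not justified: that $\cx^R X=\cx^R X^*$ for maximal Cohen--Macaulay $X$. The ``symmetry of Betti numbers across a complete resolution'' only identifies the Betti numbers of $X^*$ with the \emph{co}-Betti numbers of $X$; that the growth rate in the negative direction of a complete resolution matches the growth rate in the positive direction is a theorem over complete intersections, not a formality, and is essentially equivalent to the assertion you are trying to prove. The paper avoids this entirely: it argues that $\cx^RM=0$ iff $\cx^R\Ext^g_R(M,R)=0$ (via perfection and Gorenstein double duality $M\cong\Ext^g_R(\Ext^g_R(M,R),R)$), and when both are positive applies Theorem \ref{redcx} to the pair $(M,k)$ and $(\Ext^g_R(M,R),k)$ simultaneously to drop each by one, then concludes by induction.
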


\begin{proof} 
By Remark \ref{infk}, we may assume that $R$ has an infinite residue field.

(1). We first note that if $\cx^RM=0$, then $\cx^R(M,N)=0$, since the former condition is equivalent to $M$ having finite projective dimension.  Now induct on $r=\cx^R(M,N)$.  When $r=0$ the inequality certainly holds.  Now assume that $r>0$.  Then $\cx^RM>0$, and by Theorem \ref{redcx} one may find a principal lifting $R'$ of $R$ such that $\cx^{R'}(M,N)=\cx^R(M,N)-1$ and $\cx^{R'}M=\cx^RM-1$.  Thus by induction
\[
\cx^R(M,N)=\cx^{R'}(M,N)+1\le\cx^{R'}M+1=\cx^RM
\]

(2). For the first statement we induct on 
\[
r=\max\{\cx^R(M_1,N),\cx^R(M_2,N),\cx^R(M_3,N)\}
\]
If $r=0$, then the inequality holds.  Now assume $r>0$.  If $\cx^R(M_h,N)=0$, then the induced long exact sequence of Tor shows that $\cx^R(M_i,N)=\cx^R(M_j,N)$, and the inequality holds for all possible values of $h,i,j$.  Now assume all three complexities are positive.  Then by Theorem \ref{redcx},
one may find a principal lifting $R'$ of $R$ such that all three complexities are reduced by one.  By induction, the inequality holds over $R'$, thus also over $R$.  The second inequality is proved similarly.

(3). We induct on $r=\cx^RM$.  When $r=0$ the equality certainly holds.  Now assume $r>0$.  We claim that $\cx^R(M,M)>0$.  Otherwise, according to \cite[Theorem 3.3]{BeJo}, there exists a residual algebraic closure $Q^\sharp$ of $Q$ and a nonzerodivisor $f$ in the square of the maximal ideal of 
$Q^\sharp$ such that $\Tor^{Q^\sharp/(f)}_n(M\otimes_QQ^\sharp,M\otimes_QQ^\sharp)=0$ and $\Tor^{Q^\sharp/(f)}_n(M\otimes_QQ^\sharp,k\otimes_QQ^\sharp)\ne 0$ for all $n\gg 0$, and this contradicts Theorem \ref{AvrBuc}. Thus we know $\cx^R(M,M)>0$.  Therefore by Theorem \ref{redcx} we can find a principal lifting $R'$ of $R$ such that $\cx^{R'}(M,M)=\cx^R(M,M)-1$ and 
$\cx^{R'}M=\cx^RM-1$.  The result follows by induction.

(4). If $r=0$, then $M$ is perfect of grade $g$.  Thus, since $R$ is Gorenstein, 
we have that $\Ext^g_R(M,R)$ is also perfect of grade $g$.  In particular,
$\cx^R\Ext^g_R(M,R)=0$.  If $r>0$, then $\cx^R\Ext^g_R(M,R)>0$. Otherwise 
$M\cong\Ext^g_R(\Ext^g_R(M,R),R)$ would be perfect of grade $g$.  Thus by Theorem \ref{redcx}
we may find a principal lifting $R'$ of $R$ such that $\cx^{R'}M=\cx^RM-1$ and 
$\cx^{R'}\Ext^g_R(M,R)=\cx^R\Ext^g_R(M,R)-1$.  The result follows by induction.
\end{proof}

We also show that this invariant behaves well with respect to short exact sequences:
\begin{proposition} \label{biadd} The invariant $\theta_r^R(-,-)$ is biadditive on short exact sequences of finitely generated $R$-modules, when defined, whose pairs have complexity at most $r$.
\end{proposition}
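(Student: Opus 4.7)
The plan is to exploit the long exact sequence of Tor associated to the given short exact sequence, track lengths via the connecting maps, and show that the resulting correction term is eventually a polynomial of degree at most $r-2$, so that $\nabla^{r-1}$ annihilates it.

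Fix a short exact sequence $0\to M_1\to M_2\to M_3\to 0$ of finitely generated $R$-modules and a finitely generated $R$-module $N$, all satisfying the standing hypotheses so that each $\theta_r^R(M_i,N)$ is defined and $\cx^R(M_i,N)\le r$. The case $r=0$ says $\theta_0^R=\chi^R$ is additive on short exact sequences, which is standard; so assume $r\ge 1$. Let $\gamma_n\colon\Tor_n^R(M_3,N)\to\Tor_{n-1}^R(M_1,N)$ denote the connecting maps, and set $g_n=\ell(\Ima(\gamma_n))$, which is finite. Splitting the long exact sequence into short exact sequences at the images of $\alpha_n,\beta_n,\gamma_n$ and adding lengths yields
\[
\ell(\Tor_n^R(M_1,N))-\ell(\Tor_n^R(M_2,N))+\ell(\Tor_n^R(M_3,N))=g_{n+1}+g_n
\]
for every $n\ge 0$.

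Put $\phi(n)=\theta^R(M_1,N)(n)-\theta^R(M_2,N)(n)+\theta^R(M_3,N)(n)$. Applying the above identity at indices $2n$ and $2n-1$ and subtracting gives $\phi(n)=g_{2n+1}-g_{2n-1}$. Writing $G(m):=g_{2m+1}$, this reads $\phi=\nabla G$, so $\nabla^{r-1}\phi=\nabla^rG$. The goal reduces to showing $G$ is eventually a polynomial in $m$ of degree at most $r-1$. For $m$ large, each $\ell(\Tor_n^R(M_i,N))$ is given by the appropriate parity Hilbert polynomial of degree at most $r-1$, so $\phi(m)$ is eventually a polynomial of degree at most $r-1$. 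Telescoping $G(m)=G(J)+\sum_{j=J+1}^{m}\phi(j)$ then shows $G$ is eventually a polynomial of a priori degree at most $r$. On the other hand the inclusion $\Ima(\gamma_n)\subseteq\Tor_{n-1}^R(M_1,N)$ gives $G(m)\le P_e^{R,M_1,N}(m)$ for $m$ large, a polynomial of degree at most $r-1$. An eventual polynomial bounded by one of degree $\le r-1$ must itself have degree $\le r-1$, so $\deg G\le r-1$ and hence $\nabla^rG=0$ for $m\gg 0$. This yields $\theta_r^R(M_2,N)=\theta_r^R(M_1,N)+\theta_r^R(M_3,N)$; additivity in the second argument follows from the parallel argument applied to a short exact sequence in the second slot.

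The main obstacle is the degree bound for $G$: telescoping alone only forces $\deg G\le r$, and one must combine this with the elementary bound $g_n\le\ell(\Tor_{n-1}^R(M_1,N))$ and the complexity hypothesis $\cx^R(M_1,N)\le r$ to rule out a spurious degree-$r$ term. This is precisely where the complexity assumption is used, and it is the step that makes the $(r-1)$st difference---rather than merely the $r$th difference---annihilate $\phi$.
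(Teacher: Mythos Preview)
Your argument is correct and takes a genuinely different route from the paper's. The paper proceeds by induction on $r$: the base cases $r=0$ and $r=1$ are handled by the classical additivity of $\chi^R$ and Hochster's periodic-Tor argument, respectively, and for $r>1$ it invokes Theorem~\ref{redcx} (after passing to an infinite residue field via Remark~\ref{infk}) to find a single principal lifting $R'$ along which all three complexities drop by one and all three theta invariants are preserved, then applies the inductive hypothesis over $R'$.

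Your approach avoids the principal-lifting machinery entirely and works directly over $R$. The key idea---writing the defect $\phi$ as $\nabla G$ with $G(m)=\ell(\Ima\gamma_{2m+1})$, then showing $G$ is eventually a polynomial whose degree is forced down to at most $r-1$ by the sandwich $0\le G(m)\le P_e^{R,M_1,N}(m)$---is a clean elementary growth argument that uses only the long exact sequence of Tor and the complexity hypothesis. It is more self-contained than the paper's proof (no need for eventually regular elements, Eisenbud operators, or residue-field extensions) and makes transparent exactly where the bound $\cx^R(M_1,N)\le r$ enters. The paper's inductive approach, by contrast, fits the overall architecture of the article, where Theorem~\ref{redcx} is the workhorse, and has the virtue of reducing everything uniformly to the already-known hypersurface case.
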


\begin{proof} We induct on $r$.  When $r=0$, we have $\theta_0^R(-,-)=\chi^R(-,-)$, which is already known to be biadditive. When $r=1$ Tor modules are periodic.  Thus Hochster's proof in \cite[p. 98]{Hoc81} applies. Now assume that 
$r>1$. By Remark \ref{infk}, we may assume that $R$ has an infinite residue field.  Let $N$ be a finitely generated $R$-module and $0 \to M_1\to M_2 \to M_3 \to 0$ be a short exact sequence of finitely generated $R$-modules such that the complexities of the pairs $(M_i,N)$ are at most 
$r$, for $i=1,2,3$.  We now use Theorem \ref{redcx} and the fact that if the complexity of a pair of modules over $R$ is zero, then it is also zero over any principal lifting (see the long exact sequence \eqref{corles}): we may choose a principal lifting 
$R'$ of $R$ such that $\theta_{r-1}^{R'}(M_i,N)=\theta_r^R(M_i,N)$ for $i=1,2,3$, and all complexities over $R'$ are at most $r-1$.  Then by induction we have
\begin{align*}
&\theta_r^R(M_2,N)-\theta_r^R(M_1,N)-\theta_r^R(M_3,N) \\
&=\theta_{r-1}^{R'}(M_2,N)-\theta_{r-1}^{R'}(M_1,N)-\theta_{r-1}^{R'}(M_3,N)=0
\end{align*}
A similar argument applies in the other argument.
\end{proof}

\subsection*{Intersection inequalities}
We end this section addressing a famous open question originating from intersection theory of algebraic varieities (see \cite{PS}).

\begin{chunk}\label{QPSIT} Suppose that $Q$ is a local ring and $M$ and $N$ are finitely generated $Q$-modules with $\ell(M\otimes_QN)<\infty$ and $\pd_QM<\infty$.  Then does the following inequality hold?
\begin{equation}\label{PSIT}
\dim M+\dim N \le \dim Q
\end{equation}
\end{chunk}

As is done in \cite[Question 8.5]{Dao07}, one can ask whether an asymptotic version of \eqref{PSIT} holds:
\begin{chunk}\label{QPSAIT} Suppose that $M$ and $N$ are finitely generated $R$-modules with 
$\ell(M\otimes_RN)<\infty$ and $\Tor^Q_n(M,N)=0$ for all $n\gg 0$.  Then does the following inequality hold?
\begin{equation}\label{PSAIT}
\dim M+\dim N \le \dim R + \cx^RM
\end{equation}
\end{chunk}

\begin{proposition} Let $M$ and $N$ be finitely generated $R$-modules with $\ell(M\otimes_RN)<\infty$
and $\Tor^Q_n(M,N)=0$ for all $n\gg 0$.
Then \eqref{PSIT} holds if and only if \eqref{PSAIT} holds.
\end{proposition}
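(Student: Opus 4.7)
I read the proposition as asserting equivalence of two universal conjectural inequalities, \eqref{PSIT} and \eqref{PSAIT}, quantified over all applicable inputs; my plan handles the two directions separately.

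For $\eqref{PSAIT}\Rightarrow\eqref{PSIT}$, I would apply \eqref{PSAIT} in the degenerate case $c=0$: given any local ring $Q$ and modules $M,N$ as in \ref{QPSIT}, take $R=Q$ with empty regular sequence. The hypothesis $\pd_QM<\infty$ forces both $\cx^RM=0$ and $\Tor^Q_n(M,N)=0$ for $n\gg 0$, so \eqref{PSAIT} collapses directly to the PSIT inequality $\dim M+\dim N\le\dim Q$.

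For the main direction $\eqref{PSIT}\Rightarrow\eqref{PSAIT}$, the strategy is to use Theorem \ref{redcx} to iteratively lift $R$ to an intermediate ring $R^{(r)}$ over which $M$ has finite projective dimension, at which point \eqref{PSIT} applies. Concretely, given $R,M,N$ as in \ref{QPSAIT} with $\pd_QM<\infty$ (so $r:=\cx^RM$ is defined), I would first pass to a faithfully flat extension with infinite residue field via Remark \ref{infk}, then apply Theorem \ref{redcx} a total of $r$ times in succession, each time with module collections $\{M\}$ and $\{N,k\}$. This produces principal liftings $R=R^{(0)},R^{(1)},\dots,R^{(r)}$, each $R^{(i+1)}$ a principal lifting of $R^{(i)}$, and part (2) of that theorem applied to the pair $(M,k)$ gives $\cx^{R^{(i)}}M=r-i$. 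In particular $\cx^{R^{(r)}}M=0$, i.e., $\pd_{R^{(r)}}M<\infty$, and $\dim R^{(r)}=\dim R+r$.

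At the final step I would apply \eqref{PSIT} to the ring $R^{(r)}$ and the $R^{(r)}$-modules $M,N$. Since the $R^{(r)}$-action on each factors through $R$, the tensor product $M\otimes_{R^{(r)}}N\cong M\otimes_RN$ still has finite length, so \eqref{PSIT} yields
\[
\dim M+\dim N\le\dim R^{(r)}=\dim R+r=\dim R+\cx^RM,
\]
which is \eqref{PSAIT}. The main technical point, but not a serious obstacle, will be verifying that Theorem \ref{redcx} applies at each stage with $k$ adjoined to the second module family: the required vanishing $\Tor^Q_n(M,k)=0$ for $n\gg 0$ is precisely the condition $\pd_QM<\infty$, and the finiteness of $\ell(M\otimes_{R^{(i)}}k)=\dim_k M/\fm M$ is automatic.
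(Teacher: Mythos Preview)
Your proposal is correct and follows essentially the same strategy as the paper: reduce to the case $\cx^R M=0$ via Theorem \ref{redcx} (principal liftings), where \eqref{PSAIT} becomes an instance of \eqref{PSIT}. The paper packages this as an induction on $\cx^R M$, while you iterate the lifting $r$ times explicitly; these are the same argument. Two minor differences: you spell out the direction $\eqref{PSAIT}\Rightarrow\eqref{PSIT}$ by specializing to $c=0$, whereas the paper absorbs it into the base case remark that the two questions coincide when $\cx^R M=0$; and you include $N$ alongside $k$ in the second module family when invoking Theorem \ref{redcx}, which is harmless but unnecessary---only the pair $(M,k)$ is needed to drive $\cx^R M$ down, and the finite-length condition on $M\otimes_{R^{(r)}}N$ at the end follows (as you note) directly from $M\otimes_{R^{(r)}}N\cong M\otimes_R N$.
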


\begin{proof} We induct on $\cx^RM$.  When $\cx^RM=0$ both questions \ref{QPSIT} and 
\ref{QPSAIT} are the same question. Now assume that $\cx^RM>0$.  By Remark \ref{infk}, we may assume that $R$ has an infinite residue field. Then by Theorem \ref{redcx} there exists
a principal lifting $R'$ of $R$ such that $\cx^{R'}M=\cx^RM-1$.  Therefore induction gives that
$\dim M+\dim N \le \dim Q$ holds if and only if $\dim M+\dim N \le \dim R' + \cx^{R'}M$ holds, and the latter inequality is equivalent to $\dim M+\dim N \le \dim R + \cx^RM$.
\end{proof}

%%%%%%%%
\section{Depth instead of Dimension}\label{sec_depth}

If we use depth instead of dimension, we achieve a version of Theorem \ref{HochsterCI} where we can replace codimension by complexity and do not need to assume the vanishing of a theta invariant. In this section, $R=Q/(f_1,...,f_c)$ is assumed to be a complete intersection.

Recall from \cite{J1} the generalized Auslander-Buchsbaum formula: 
\begin{align*}
\sup\{i\mid & \Tor_i^R(M,N)\ne 0\}\\
&=\sup\{\depth R_p-\depth M_p-\depth N_p\mid p\in\supp M\cap\supp N\}
\end{align*}
which is shown to hold for finitely generated modules $M$ and $N$ over a complete intersection $R$, provided the left hand side is finite.  If $\ell(M\otimes_R N)<\infty$, then the maximal ideal of $R$ is the only prime in $\supp M\cap \supp N$, and so if $\cx^R(M,N)=0$, then the formula reduces to
\begin{equation}\label{GAB}
\sup\{i\mid \Tor_i^R(M,N)\ne 0\}=\depth R - \depth M - \depth N
\end{equation}
and one concludes that if $\ell(M\otimes_R N)<\infty$ and $\cx^R(M,N)=0$ then
\begin{equation}\label{depthineq}
\depth M + \depth N \le \depth R
\end{equation}
and if $R$ is a complete intersection, then it is Cohen-Macaulay, and $\depth R$ can be replaced by 
$\dim R$. Of course if $M$ and $N$ are also Cohen-Macaulay modules, then depth can replaced by dim in all the results below.

\begin{theorem}\label{depth_ineq} Assume that $R$ is a complete intersection, and that $M$ and $N$ are finitely generated $R$-modules with $\ell(M\otimes_R N)<\infty$. Then 
\[
\depth M+\depth N\le\dim R+\cx^R(M,N)\;.
\] 
\end{theorem}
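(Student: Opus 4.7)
The plan is to induct on $r = \cx^R(M,N)$, handling the base case $r=0$ via the generalized Auslander--Buchsbaum formula \eqref{GAB} and the inductive step by reducing complexity through a principal lifting as provided by Theorem \ref{redcx}.

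For the base case $r = 0$, the formula \eqref{GAB} gives
\[
\sup\{i \mid \Tor_i^R(M,N) \ne 0\} = \depth R - \depth M - \depth N,
\]
so in particular $\depth M + \depth N \le \depth R$. Since $R$ is a complete intersection, it is Cohen--Macaulay, hence $\depth R = \dim R$, which yields the desired inequality with $r = 0$.

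For the inductive step, assume $r \ge 1$. By Remark \ref{infk} we may pass to a faithfully flat extension with infinite residue field without altering any of the relevant invariants. Theorem \ref{redcx} then provides a principal lifting $R'$ of $R$, with $R'$ itself a complete intersection, $\dim R' = \dim R + 1$, and $\cx^{R'}(M,N) = r - 1$. Since $M$ and $N$ are $R$-modules we have $\ell(M \otimes_{R'} N) = \ell(M \otimes_R N) < \infty$ (the two tensor products coincide because $R$ is a quotient of $R'$ and both $M,N$ are killed by the kernel), so the inductive hypothesis applies over $R'$ to give
\[
\depth_{R'} M + \depth_{R'} N \le \dim R' + \cx^{R'}(M,N) = (\dim R + 1) + (r-1) = \dim R + r.
\]

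The last step is the identification $\depth_{R'} M = \depth_R M$, and similarly for $N$. This is a standard fact about depth under a surjective map of local rings: writing $R = R'/(g_c)$, the maximal ideal of $R$ is the image of the maximal ideal of $R'$, and since $g_c$ annihilates $M$, any $M$-regular sequence in the maximal ideal of $R'$ descends to an $M$-regular sequence in the maximal ideal of $R$ of the same length, and vice versa by lifting. Combining this with the inductive estimate gives the claimed bound $\depth M + \depth N \le \dim R + r$. The only non-bookkeeping point is the comparison of depths across $R$ and $R'$, and this comparison is a routine consequence of the quotient relationship between the two rings, so I expect no genuine obstacle.
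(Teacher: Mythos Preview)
Your proof is correct and follows essentially the same approach as the paper: induct on $r=\cx^R(M,N)$, handle $r=0$ via the generalized Auslander--Buchsbaum formula \eqref{GAB}, and for $r\ge 1$ pass to an infinite residue field via Remark \ref{infk} and then invoke Theorem \ref{redcx} to produce a principal lifting $R'$ with $\cx^{R'}(M,N)=r-1$ and $\dim R'=\dim R+1$. The only difference is that you spell out the bookkeeping (that $\ell(M\otimes_{R'}N)<\infty$ and $\depth_{R'}M=\depth_R M$) which the paper leaves implicit.
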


\begin{proof} We induct on $r=\cx^R(M,N)$. When $r=0$, the statement is just \eqref{depthineq}. Now assuming $r>0$, by Remark \ref{infk}, we may assume that $R$ has an infinite residue field, and then by Theorem \ref{redcx} there exists a principal lifting $R'$ of $R$ such that 
$\cx^{R'}(M,N)=r-1$.  Thus by induction we have 
\[
\depth M + \depth N \leq \depth R' + r-1=\depth R+r
\]
as required.
\end{proof}

Consider the invariant $\q^R(M,N)=\sup\{i\mid \Tor_i^R(M,N)\ne 0\}$.

\begin{chunk}\label{qlift} By \cite[Lemma 2.9(2)]{J1}, when $\q^R(M,N)$ is finite we have
\[
\q^Q(M,N)=\q^R(M,N)+c
\]
\end{chunk}

Using the invariant $\q^Q(M,N)$, we have a more precise version of Theorem \ref{depth_ineq}.

\begin{theorem}\label{theform} Assume that $R$ is a complete intersection, and that $M$ and $N$ are finitely generated $R$-modules with $\ell(M\otimes_R N)<\infty$. Then 
\[
\q^Q(M,N)-c=\dim R-\depth M-\depth N
\] 
\end{theorem}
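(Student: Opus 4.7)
The plan is to reduce the statement to a single application of the generalized Auslander-Buchsbaum formula recalled at the beginning of this section, but applied not over $R$ but over the ambient regular local ring $Q$, which is itself a complete intersection of codimension zero. Since $Q$ is regular, $\pd_Q M$ is finite for every finitely generated $Q$-module, so in particular $\q^Q(M,N)\le\pd_Q M<\infty$ and the hypothesis for the formula is met.

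The first step is to check that the supremum appearing on the right hand side of the formula collapses to the maximal ideal. Because $(f_1,\dots,f_c)$ annihilates both $M$ and $N$, the $Q$-supports $\supp_Q M$ and $\supp_Q N$ are contained in the closed subset of primes of $Q$ that contain $(f_1,\dots,f_c)$, which is naturally in bijection with $\operatorname{Spec} R$. Under this identification $\supp_Q M\cap\supp_Q N$ corresponds to $\supp_R M\cap\supp_R N$, and the hypothesis $\ell(M\otimes_R N)<\infty$ forces this intersection to consist solely of $\mathfrak{m}$. The generalized Auslander-Buchsbaum formula over $Q$ therefore reduces to
\[
\q^Q(M,N) = \depth Q - \depth_Q M - \depth_Q N.
\]

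The second step is purely bookkeeping: rewrite the right hand side in terms of $R$. Since $Q$ is regular, $\depth Q=\dim Q$, and since $f_1,\dots,f_c$ is a $Q$-regular sequence of length $c$, we have $\dim Q=\dim R+c$. Moreover, because $M$ is an $R$-module and the maximal ideal of $R$ is the image of the maximal ideal of $Q$, a maximal $M$-regular sequence in $\mathfrak{m}_Q$ maps bijectively to a maximal $M$-regular sequence in $\mathfrak{m}_R$, so $\depth_Q M=\depth_R M$; similarly $\depth_Q N=\depth_R N$. Substituting these identifications and rearranging gives the claimed equality.

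The only substantive input here is the generalized Auslander-Buchsbaum formula itself, so there is no real obstacle; note in particular that we do not need to assume $\q^R(M,N)$ is finite (which would force $\cx^R(M,N)=0$), because the relevant Tor finiteness is happening over $Q$. It is worth observing that this refinement immediately recovers Theorem \ref{depth_ineq}, since $\q^Q(M,N)\le\pd_Q M\le\dim Q=\dim R+c$ combined with the Auslander-Buchsbaum equality $\pd_Q M=\dim R+c-\depth M$ shows the bound can be sharper than $\dim R+\cx^R(M,N)$ in general.
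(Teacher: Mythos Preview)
Your main argument is correct and is exactly the paper's approach: apply the generalized Auslander--Buchsbaum formula over the regular ring $Q$ (where finiteness of $\q^Q(M,N)$ is automatic), observe that the support intersection collapses to the maximal ideal, and then translate $\dim Q$ and $\depth_Q$ back to $R$. The paper does this in one line, citing \eqref{GAB} with $Q$ in place of $R$.

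One caveat: your closing remark that this ``immediately recovers Theorem~\ref{depth_ineq}'' is not right. From Theorem~\ref{theform} you have $\depth M+\depth N=\dim R+c-\q^Q(M,N)$, so recovering the bound $\depth M+\depth N\le\dim R+\cx^R(M,N)$ would require the \emph{lower} bound $\q^Q(M,N)\ge c-\cx^R(M,N)$. The inequalities you quote, namely $\q^Q(M,N)\le\pd_Q M=\dim R+c-\depth M$, go the wrong direction and only yield the triviality $\depth N\ge0$. In fact the paper presents the inequality $c-\cx^R(M,N)\le\q^Q(M,N)$ as a \emph{consequence} of Theorems~\ref{depth_ineq} and~\ref{theform}, not a route to Theorem~\ref{depth_ineq}. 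You should drop that final sentence.
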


\begin{proof} We have from \eqref{GAB} and \ref{qlift}
\begin{align}
\q^Q(M,N)&=\dim Q - \depth M - \depth N\\
&=(\dim R +c)- \depth M - \depth N
\end{align}
which gives the result.
\end{proof}

Thus we get the following.

\begin{corollary} Assume that $R$ is a complete intersection, and that $M$ and $N$ are finitely generated $R$-modules with 
$\ell(M\otimes_R N)<\infty$. Then 
\[
\depth M+\depth N\le\dim R
\] 
if and only if $\q^Q(M,N)\ge c$.
\end{corollary}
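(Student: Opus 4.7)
The plan is essentially to read off the Corollary as an immediate algebraic rearrangement of the equality in Theorem \ref{theform}, which already contains all the substantive content. There is no further induction, dimension shift, or principal lifting to do here: the hard work has been absorbed into \eqref{GAB} (which provides the relationship between $\q^R(M,N)$ and the depths in the codimension $c=0$ case) and into the formula $\q^Q(M,N) = \q^R(M,N)+c$ from \ref{qlift}, both of which were used in the proof of Theorem \ref{theform}.

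Concretely, I would begin by observing that since $R$ is a complete intersection, $Q$ is regular local, so $\pd_Q M$ and $\pd_Q N$ are finite and therefore $\q^Q(M,N)$ is a well-defined integer. This means the inequality $\q^Q(M,N) \ge c$ is equivalent to $\q^Q(M,N) - c \ge 0$. Invoking Theorem \ref{theform}, the left-hand side equals $\dim R - \depth M - \depth N$, so this inequality becomes $\dim R - \depth M - \depth N \ge 0$, which is precisely $\depth M + \depth N \le \dim R$. Reversing the steps gives the converse.

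There is no genuine obstacle here; the only thing to be slightly careful about is the standing hypotheses of Theorem \ref{theform}, namely that we are in the complete intersection setting with $\ell(M\otimes_R N)<\infty$, both of which are assumed in the Corollary. The statement is best presented as a one-line deduction, perhaps as follows: by Theorem \ref{theform}, $\q^Q(M,N) - c = \dim R - \depth M - \depth N$, so $\q^Q(M,N) \ge c$ holds if and only if $\dim R \ge \depth M + \depth N$.
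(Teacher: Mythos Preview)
Your proposal is correct and matches the paper's approach: the corollary is stated immediately after Theorem~\ref{theform} with no proof given, signaling that it is exactly the one-line rearrangement you describe. Your extra remark that $\q^Q(M,N)$ is a finite integer (since $Q$ is regular) is a reasonable sanity check but not essential to the argument.
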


From the definition of $\q^Q(M,N)$ and Theorems \ref{depth_ineq} and \ref{theform}, there are bounds on this invariant:

\begin{chunk} For a complete intersection $R$ and finitely generated $R$-modules $M$ and $N$ satisfying $\ell(M\otimes_R N)<\infty$, we have
\[
c-\cx^R(M,N)\le \q^Q(M,N)\le \dim Q \;.
\]
\end{chunk}

There are examples showing that these bounds are sharp for pairs of modules $M$ and $N$ satisfying $\ell(M\otimes_R N)<\infty$. Indeed, taking both $M=k$ and $N=k$ gives $\q^Q(M,N)=\dim Q$. For the other extreme we employ a well-used example from \cite[Example 4.1]{J2}.

\begin{example} Let $k$ be a field, $Q=k[x_1,\dots,x_n,y_1,\dots,y_n]$ and 
\[
R=Q/(x_1y_1,\dots,x_ny_n)
\] 
Then for the $R$-modules $M=R/(x_1,\dots,x_n)$ and $N=R/(y_1,\dots,y_n)$ we have $\ell(M\otimes_RN)<\infty$, $\cx^R(M,N)=n=c$ and $\q^Q(M,N)=0$.
\end{example}

The following is an extension of \cite[Corollary V.B.6]{Serre}.

\begin{corollary} Assume that $R$ is a complete intersection, and that $M$ and $N$ are finitely generated $R$-modules with $\ell(M\otimes_RN)<\infty$. Then 
\[
\q^Q(M,N)\ge \dim Q-\dim M - \dim N
\]
and equality holds if and only if $M$ and $N$ are Cohen-Macaulay.
\end{corollary}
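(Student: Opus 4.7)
The plan is to derive this as an essentially immediate consequence of Theorem \ref{theform}. First, I would rewrite the conclusion of Theorem \ref{theform} in the form
\[
\q^Q(M,N) = \dim Q - \depth M - \depth N,
\]
using the identity $\dim Q = \dim R + c$, which holds because $f_1,\ldots,f_c$ is a $Q$-regular sequence.

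Next, the inequality will follow from the standard fact that $\depth X \leq \dim X$ for any finitely generated module $X$ over a Noetherian local ring. Applying this to both $M$ and $N$ gives $\depth M + \depth N \leq \dim M + \dim N$, so substitution yields
\[
\q^Q(M,N) = \dim Q - \depth M - \depth N \geq \dim Q - \dim M - \dim N,
\]
as required.

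For the equality statement, note that equality in $\depth M + \depth N \leq \dim M + \dim N$ forces both $\depth M = \dim M$ and $\depth N = \dim N$, since each summand satisfies $\depth \leq \dim$ individually; this is precisely the condition that $M$ and $N$ are both Cohen--Macaulay. Conversely, if $M$ and $N$ are Cohen--Macaulay, then the two equalities combine with the formula above to give the equality $\q^Q(M,N) = \dim Q - \dim M - \dim N$. There is no real obstacle here: all the content is packed into Theorem \ref{theform}, and the remaining ingredients are only the relation $\dim Q = \dim R + c$ and the elementary inequality $\depth \leq \dim$.
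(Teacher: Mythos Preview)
Your proof is correct and follows essentially the same approach as the paper: the paper invokes \eqref{GAB} directly with $R=Q$ to obtain $\q^Q(M,N)=\dim Q-\depth M-\depth N$, then adds and subtracts $\dim M+\dim N$ to isolate the nonnegative quantities $\dim M-\depth M$ and $\dim N-\depth N$. Your route through Theorem~\ref{theform} together with $\dim Q=\dim R+c$ yields the identical formula, and the rest of the argument is the same.
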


\begin{proof}
From \ref{GAB} (with $R=Q$) we have
\begin{align*}
\q^Q(M,N)&=\dim Q-\depth M-\depth N\\
&=\dim Q-\dim M - \dim N + (\dim M-\depth M) + (\dim N-\depth N)
\end{align*}
Since the quantities in parentheses are nonnegative, the inequality follows; the quantities are zero if and only if equality holds.
\end{proof}

We consider the Intersection Theorem of Peskine-Szpiro and Roberts: Suppose that $A$ is a local ring, and $M$ and $N$ are finitely generated $A$-modules such that $\pd_AM<\infty$ and $\ell(M\otimes_AN)<\infty$.  Then $\dim N \le \pd_AM$, or, using the Auslander-Buchsbaum Formula, this can be rewritten as
\begin{equation}\label{IT}
\dim N + \depth M \le \depth A 
\end{equation}

We may extend this as follows.

\begin{theorem}\label{int_thm_ext} Assume that $R$ is a complete intersection, and that $M$ and $N$ are finitely generated $R$-modules with $\ell(M\otimes_RN)<\infty$. Then
\begin{equation}
\dim N +\depth M\le \dim R +\cx^RM \;.
\end{equation}
\end{theorem}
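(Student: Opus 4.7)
The plan is to imitate the inductive strategy used in Theorem \ref{depth_ineq}, but to induct on $r = \cx^R M$ rather than on $\cx^R(M,N)$. By Remark \ref{infk} we may first assume that $R$ has an infinite residue field so that Theorem \ref{redcx} is available.

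For the base case $r = 0$, the equality $\cx^R M = \cx^R(M,k) = 0$ is equivalent to $\pd_R M < \infty$. The classical Intersection Theorem \eqref{IT} then gives
\[
\dim N + \depth M \le \depth R,
\]
and since $R$ is a complete intersection, it is Cohen-Macaulay, so $\depth R = \dim R$. This is exactly the desired inequality when $r=0$.

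For the inductive step with $r \ge 1$, Theorem \ref{redcx} applied to the pair $(M,k)$ produces a principal lifting $R' = Q/(g_1,\dots,g_{c-1})$ of $R$ with $\cx^{R'} M = r - 1$. This $R'$ is itself a complete intersection, now of dimension $\dim R + 1$. We need to verify that the hypotheses of the theorem continue to hold over $R'$: writing $R = R'/(g_c)$, the element $g_c$ annihilates both $M$ and $N$, so $M \otimes_{R'} N = M \otimes_R N$, which has finite length. For the same reason, $\dim_{R'} N = \dim_R N$ (since $\Ann_{R'}(N)$ contains $g_c$ and its preimage in $R'$ of $\Ann_R(N)$ gives the same quotient ring) and $\depth_{R'} M = \depth_R M$ (any element of $(g_c)$ is a zerodivisor on $M$, so maximal $M$-regular sequences in $\fm_{R'}$ and $\fm_R$ have the same length). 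Applying the inductive hypothesis over $R'$ then gives
\[
\dim N + \depth M \le \dim R' + (r-1) = \dim R + r,
\]
as required.

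The only delicate point is the verification that $\dim N$, $\depth M$, and the finite-length hypothesis all transfer cleanly under the change of rings $R' \to R$; this rests on the straightforward observation that the parameter $g_c$ kills both modules under consideration. Once that is settled, the proof is a direct parallel to Theorem \ref{depth_ineq}, with the classical Intersection Theorem playing the role of the depth inequality \eqref{depthineq}.
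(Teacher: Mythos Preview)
Your proof is correct and follows essentially the same approach as the paper: induction on $\cx^R M$, with the base case handled by the Intersection Theorem and the inductive step by passing to a principal lifting via Theorem \ref{redcx}. The only difference is that you are more explicit about verifying that the hypotheses (finite length of the tensor product, and the values of $\dim N$ and $\depth M$) transfer unchanged to $R'$, which the paper leaves implicit.
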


\begin{proof} We induct on $s=\cx^RM$. The case $s=0$ is equivalent to $\pd_RM<\infty$.  Thus in this case \eqref{IT} gives the desired inequality.

Now assume $s>0$.  By Remark \ref{infk}, we may assume that $R$ has an infinite residue field. Then by Theorem \ref{redcx} there exists a principal lifting $R'$ of $R$ such that $\cx^{R'}M=s-1$.  Thus by induction we have  
\[
\dim N +\depth M\le \dim R' +\cx^{R'}M=(\dim R+1)+(s-1)=\dim R+s
\]
as desired.
\end{proof}

%%%%%%%%
\section{Depth versions of Serre's conjectures in higher codimension}\label{sec_highercodim} 

In this section, assume that $R=Q/(f_1,...,f_c)$ is a complete intersection. Let $M$ and $N$ be finitely generated $R$-modules with $\ell(M\otimes_R N)<\infty$ and $r=\cx^R(M,N)$. In Section \ref{sec_theta}, we suggest that $\theta_c^R$ should play the role of $\chi^Q$ in a higher codimension analogue of Serre's intersection multiplicity conjectures. On the other hand, one observation of Section \ref{sec_depth} is that shifting focus from dimension to depth, and from codimension to complexity, provides another higher codimension analogue of Serre's dimension inequality (see Theorem \ref{depth_ineq}): 
\begin{equation}\label{eqn_depthineq}\depth M + \depth N \leq \dim R + r\end{equation}
Thus we are naturally led to ask: Does equality in \eqref{eqn_depthineq} imply positivity of $\theta_r^R(M,N)$? We confirm this here, and remark that the converse does not hold. We also modify an example of Dutta, Hochster, and McLaughlin to show that $\theta_r^R(M,N)$ be can be negative.

We start with a higher codimension ``depth" analogue of the positivity conjecture: 

\begin{theorem} \label{depth_ineq_strict_vanishing}
Assume that $R$ is a complete intersection, and that $M$ and $N$ are finitely generated $R$-modules with $\ell(M\otimes_R N)<\infty$. Set $r=\cx^R(M,N)$.  Consider the following conditions
 \begin{enumerate}
 \item $\theta_r^R(M,N)\leq 0$;
 \item $\depth M+\depth N<\dim R+r$. 
 \end{enumerate}
Then $(1)$ implies $(2)$. 
\end{theorem}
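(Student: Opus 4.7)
The plan is to prove the contrapositive by induction on $r=\cx^R(M,N)$, using Theorem \ref{redcx} as the reduction mechanism. As in the proof of Theorem \ref{depth_ineq}, I will first pass to the infinite residue field case via Remark \ref{infk} (this preserves both sides of the statement).

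For the base case $r=0$, the invariant $\theta_0^R(M,N)$ equals $\chi^R(M,N)$ by the convention established after Lemma \ref{chi=theta}, and since $\cx^R(M,N)=0$ we may apply the generalized Auslander--Buchsbaum formula \eqref{GAB}. Because $R$ is a complete intersection it is Cohen--Macaulay, so $\depth R=\dim R$ and thus
\[
\q^R(M,N)=\dim R-\depth M-\depth N\;.
\]
Therefore $\depth M+\depth N=\dim R$ forces $\q^R(M,N)=0$, i.e.\ $\Tor_i^R(M,N)=0$ for all $i\ge 1$, and hence $\chi^R(M,N)=\ell(M\otimes_RN)>0$. Contrapositively, if $\theta_0^R(M,N)\le 0$, then $\depth M+\depth N<\dim R$, establishing the case $r=0$.

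For the inductive step $r\geq 1$, apply Theorem \ref{redcx} to obtain a principal lifting $R'$ of $R$ satisfying $\cx^{R'}(M,N)=r-1$ and $\theta_{r-1}^{R'}(M,N)=\theta_r^R(M,N)$. The hypothesis $\theta_r^R(M,N)\le 0$ transfers to $\theta_{r-1}^{R'}(M,N)\le 0$, so by the inductive hypothesis,
\[
\depth_{R'}M+\depth_{R'}N<\dim R'+(r-1)\;.
\]
Now $R=R'/(g_c)$ with $g_c$ a non-zero-divisor on $R'$ that annihilates $M$ and $N$; it is standard that depth of an $R$-module is preserved under such a change of rings, so $\depth_{R'}M=\depth_RM$ and likewise for $N$. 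Combined with $\dim R'=\dim R+1$, this yields $\depth M+\depth N<\dim R+r$, as desired.

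The inductive machinery is essentially routine once Theorem \ref{redcx} is in hand; the only delicate point is the base case, where one must leverage the fact that equality in the depth inequality \eqref{depthineq} rigidly forces \emph{all} higher Tor modules to vanish. This rigidity is what ultimately drives the strict positivity of $\chi^R(M,N)$, and it is the feature of the depth formulation (as opposed to a dimension formulation) that removes the need for any admissibility hypothesis on $R$ and the need to invoke Serre's positivity conjecture in the base case.
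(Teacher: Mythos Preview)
Your proof is correct and follows essentially the same approach as the paper: both argue the contrapositive in the base case $r=0$ via the generalized Auslander--Buchsbaum formula \eqref{GAB} to force $\Tor_i^R(M,N)=0$ for $i\ge 1$ and hence $\chi^R(M,N)=\ell(M\otimes_RN)>0$, and both handle the inductive step by passing to a principal lifting $R'$ via Theorem \ref{redcx}. The only cosmetic difference is that you carry the direct implication through the induction (after restating the base case in direct form), whereas the paper phrases the entire argument as a reduction of the contrapositive down to $r=0$; your explicit remarks on depth invariance under $R'\to R$ and on why no admissibility hypothesis is needed are accurate and welcome.
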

\begin{proof}
We will show that if $\depth M+\depth N=\dim R+r$, then $\theta_r^R(M,N)>0$. 
First consider the case where $r=0$. In this case, one has that the assumption implies by \eqref{GAB} that $\Tor_i^R(M,N)=0$ for $i>0$. Thus 
$$\theta_0^R(M,N)=\chi^R(M,N)=\ell(M\otimes_RN)>0.$$ 
Now consider the case when $r>0$. By Remark \ref{infk}, we may assume that $R$ has an infinite residue field. By Theorem \ref{redcx}, there exists a principal lifting $R'$ of $R$ such that 
$$\cx^{R'}(M,N)=\cx^R(M,N)-1\quad\text{and}\quad\theta_{r-1}^{R'}(M,N)=\theta_r^R(M,N).$$ 
Thus $\dim R+r=\dim R'+\cx^{R'}(M,N)$. 
Inductively we may thus reduce to the case where $\cx^R(M,N)=0$, and we are done by the first part.
\end{proof}

\begin{remark}\label{converse}
The implication $(2)\Rightarrow(1)$ in Theorem \ref{depth_ineq_strict_vanishing} does not always hold: over a regular local ring $Q$, take any pair of finitely generated $Q$-modules $M$ and $N$, at least one of which is not Cohen-Macaulay, having $\chi^Q(M,N)>0$. 
(For example, one could take $Q=k[[x,y]]$, $M=Q/(x^2,xy)$, and $N=Q/(y)$, and note that 
$\chi^Q(M,N)=1$.) In this situation, one would have $\depth M+\depth N<\dim Q$, yet $\theta_0^Q(M,N)>0$. Thus the corresponding analogue of the vanishing conjecture for $\theta_r^R$ relative to the inequality \eqref{eqn_depthineq} fails.
\end{remark} 

It is still natural to wonder whether an analogue of the vanishing conjecture for $\theta_r^R$ relative to the inequality \eqref{eqn_depthineq} holds for Cohen-Macaulay modules, but this is also not the case. When $r=0$, the example from Dutta-Hochster-McLaughlin \cite{DHM85} provides a hypersurface $R$ and $R$-modules $M$ and $N$, where $\depth M=0$, $\depth N=2$, $\dim R=3$, and $r=0$, but $\theta_0^R(M,N)=\chi^R(M,N)<0$. One can modify that same example to obtain a similar counterexample for $\theta_1^R$:

\begin{example}\label{DHMex}
Let $R'$ be the local hypersurface from \cite{DHM85}: 
$$R'=k[X_1,X_2,X_3,X_4]_{(X_1,X_2,X_3,X_4)}/(X_1X_4-X_2X_3)$$ 
and consider the local complete intersection $R=R'/(x_2^2)$ having codimension 2 and dimension 2. Let $M$ be the $R'$-module of finite projective dimension and finite length from \cite{DHM85}; it is also an $R$-module as it is killed by $x_2^2$. Let $N=R/(x_1,x_2)$ and note $\depth N=2=\dim N$.

It is clear that $r=\cx^R(M,N)$ is at most 1. To see that it cannot be 0, note that $R'$ is a principal lifting of 
$R$ such that $\cx^{R'}(M,N)=0$. Now by Lemma \ref{chi=theta} and the fact that $\chi^{R'}(M,N)=-1$, we see that $\cx^{R}(M,N)\ne 0$. Again by Lemma \ref{chi=theta} we have 
$\theta_1^{R}(M,N)=\chi^{R'}(M,N)=-1$. Thus the implication 
\[
\dim M + \dim N < \dim R + r \quad \implies \quad \theta^R_1(M,N)=0
\]
fails for a codimension 2 complete intersection $R$.
\end{example}
\begin{remark}\label{2to1fails}
This example also shows that the implication $(2)\Rightarrow (1)$ in Theorem \ref{HochsterCI} does not hold if one only replaces the codimension $c$ by the complexity $r$ in both conditions of that theorem, even for pairs of Cohen-Macaulay modules.
\end{remark}
The following still seems reasonable to ask, however:
\begin{question}
Assume that $\ell(M\otimes_R N)<\infty$.
If $\dim M+\dim N<\dim R+r$, does one always have $\theta_r^R(M,N)\leq 0$?
\end{question}

%%%%%%%%%%%
\section{Lifting pairs of modules}\label{sec_lift}
In this section we introduce a condition on a pair of $R$-modules under which a version of Theorem \ref{HochsterCI} involving the complexity of the pair of modules holds; cf. Remark \ref{2to1fails}. Let $R=Q/(f_1,...,f_c)$ be a complete intersection as in Notation \ref{Rnotation}.

\begin{definition}\label{int_lift_dfn}
Let $M,N$ be a pair of $R$-modules with $\cx^R(M,N)=0$ and such that $\ell(M\otimes_R N)<\infty$. An \emph{intersection lifting of $M,N$ to $Q$} is a pair of $Q$-modules $M',N'$ such that $\ell(M'\otimes_Q N')<\infty$ and satisfying:
\begin{enumerate}
\item $\dim M'+\dim N'=\dim M+\dim N+c$, and
\item $\chi^Q(M',N')=\chi^R(M,N)$.
\end{enumerate}
\end{definition}

\begin{definition}
A ring $R'$ is an \emph{intermediate complete intersection} of $R$ of codimension $c'$ if there exists a set of generators $g_1,...,g_c$ of $(f_1,...,f_c)$ such that $R'=Q/(g_1,...,g_{c'})$. In this case, we have $R=R'/(g_{c'+1},...,g_c)$. 
\end{definition}

\begin{definition}\label{dfn_int_lift}
Let $M,N$ be a pair of $R$-modules. The pair $M,N$ is \emph{intersection liftable to $Q$} if for any intermediate complete intersection $R'$ with $\cx^{R'}(M,N)=0$, there exists an intersection lifting to $Q$ of the pair of $R'$-modules $M,N$.
\end{definition}

One may compare the next result to Serre's intersection multiplicity conjectures recalled in \ref{SerreIMC}. 

\begin{theorem}\label{thm_liftable}
Assume that $R=Q/(f_1,...,f_c)$ is a complete intersection. Let $M,N$ be a pair of $R$-modules with $\cx^R(M,N)=r$. If $M,N$ is intersection liftable to $Q$, then the following hold:
\begin{enumerate}
\item[$(1)$] $\dim M+\dim N\le\dim R+r$,
\item[$(2)$] $\theta_r^R(M,N)\geq 0$,
\item[$(3)$] If $\dim M+\dim N<\dim R+r$, then $\theta_r^R(M,N)=0$,
\item[$(4)$] If $\dim M+\dim N=\dim R+r$ and $Q$ satisfies Serre's positivity conjecture, then $\theta_r^R(M,N)>0$.
\end{enumerate}
\end{theorem}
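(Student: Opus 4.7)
The plan is to prove all four conclusions simultaneously by induction on the complexity $r = \cx^R(M,N)$, using Theorem \ref{redcx} to descend through the codimension one step at a time.

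For the base case $r = 0$, observe that $R$ itself is an intermediate complete intersection of $R$ with $\cx^R(M,N) = 0$, so intersection liftability supplies a pair of $Q$-modules $M', N'$ with $\ell(M' \otimes_Q N') < \infty$, $\dim M' + \dim N' = \dim M + \dim N + c$, and $\chi^Q(M',N') = \chi^R(M,N) = \theta_0^R(M,N)$. Applying the four parts of Serre's intersection multiplicity statements (\ref{SerreIMC}) to $Q$ and $M', N'$ then yields the four conclusions: Serre's dimension inequality $\dim M' + \dim N' \leq \dim Q$ rearranges to $\dim M + \dim N \leq \dim R$, giving (1); Gabber's nonnegativity theorem yields (2); the vanishing theorem of Roberts and Gillet-Soul\'e yields (3); and the hypothesized positivity for $Q$ yields (4).

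For the inductive step with $r \geq 1$, by Remark \ref{infk} we may assume the residue field of $R$ is infinite, and Theorem \ref{redcx} then produces a principal lifting $R'$ of $R$ with $\cx^{R'}(M,N) = r - 1$ and $\theta_{r-1}^{R'}(M,N) = \theta_r^R(M,N)$. The essential bookkeeping step is to verify that the pair $M, N$ remains intersection liftable to $Q$ when viewed over $R'$: given any intermediate complete intersection $R'' = Q/(h_1, \ldots, h_{c''})$ of $R'$, where $h_1, \ldots, h_{c-1}$ generates the defining ideal $(g_1, \ldots, g_{c-1})$ of $R'$, the tuple $h_1, \ldots, h_{c-1}, g_c$ generates $(f_1, \ldots, f_c)$, exhibiting $R''$ as an intermediate complete intersection of $R$ as well, so the intersection lifting supplied over $R$ serves as one over $R'$. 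Since $\dim M$ and $\dim N$ are unchanged when $M$ and $N$ are regarded as $R'$-modules, applying the inductive hypothesis over $R'$ gives $\dim M + \dim N \leq \dim R' + (r-1) = \dim R + r$ for (1), and the corresponding sign/vanishing statements for $\theta_{r-1}^{R'}(M,N) = \theta_r^R(M,N)$ deliver (2), (3), and (4).

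The main obstacle is precisely that small descent of intersection liftability along principal liftings; once it is in hand, the induction runs without further effort, because Theorem \ref{redcx} was engineered to preserve exactly the two pieces of numerical data ($\dim R + r$ and $\theta_r^R(M,N)$) that enter all four conclusions.
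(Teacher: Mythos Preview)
Your proof is correct and follows essentially the same path as the paper's: reduce to complexity zero via Theorem \ref{redcx}, then invoke intersection liftability and Serre's results over $Q$. The only difference is structural. The paper iterates Theorem \ref{redcx} $r$ times in one stroke to reach an intermediate complete intersection $R'$ with $\cx^{R'}(M,N)=0$ and $\theta_r^R(M,N)=\theta_0^{R'}(M,N)=\chi^{R'}(M,N)$, and then applies intersection liftability \emph{once} at that $R'$; you instead run a formal induction on $r$, which forces you to carry the full intersection-liftability hypothesis down through each principal lifting (your ``bookkeeping'' paragraph). Your verification that every intermediate complete intersection of $R'$ is also one of $R$ is correct and is exactly what underlies the paper's implicit claim that the iterated principal lifting is an intermediate complete intersection of $R$. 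So the two arguments are equivalent, the paper's being slightly more economical since it only needs liftability at the single terminal ring rather than the descent of the entire liftability property.
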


\begin{proof}
By Remark \ref{infk}, we may assume that $R$ has an infinite residue field. Repeatedly applying Theorem \ref{redcx}, we obtain an intermediate complete intersection $R'$ of codimension $c-r$ such that $\cx^{R'}(M,N)=0$ and $\theta_r^R(M,N)=\theta_0^{R'}(M,N)$. By intersection liftability, one has that there exist $Q$-modules $M'$ and $N'$ with $\ell(M'\otimes_Q N')<\infty$ and such that 
$$\dim M'+\dim N'=\dim M+\dim N+c-r\quad\text{and}\quad\chi^{Q}(M',N')=\chi^{R'}(M,N)\;.$$ 

The proofs of (1)-(4) all reduce to the corresponding results in \ref{SerreIMC}. First notice that $\dim M+\dim N\leq \dim R+r$ is equivalent to $\dim M'+ \dim N'\leq \dim Q$, so (1) holds. One now has $\theta_r^R(M,N)=\chi^{R'}(M,N)$, thus $\theta_r^R(M,N)=\chi^Q(M',N')$. Non-negativity of $\chi^Q$ implies the same for $\theta_r^R$, so that (2) holds.  Since $\dim M+\dim N<\dim R+r$ is equivalent to $\dim M'+\dim N'<\dim Q$, (3) and (4) follow similarly.
\end{proof}

A first example of intersection liftability is when one of the modules is cyclic and defined by a regular sequence.

\begin{example}
Let $M,N$ be a pair of $R$-modules with $\cx^R(M,N)=0$ and $\ell(M\otimes_R N)<\infty$, and suppose that 
$M=R/(g_1,...,g_n)$, where $g_1,...,g_n$ is an $R$-regular sequence. As $Q$ is local, the sequence $g_1,...,g_n$ lifts to a regular sequence in $Q$, which (by abuse of notation) we also call $g_1,...,g_n$. Set $M'=Q/(g_1,...,g_n)$ and $N'=N$. 

We claim that $M',N'$ is an intersection lifting of $M,N$ to $Q$. As $f_1,...,f_c$ is $M'$-regular, and $M=M'/(f_1,...,f_c)M'$, evidently $\dim M'=\dim M+c$, so condition (1) of Definition 
\ref{int_lift_dfn} holds. The Koszul complex $\Kos^Q(g_1,...,g_n)$ is a free resolution of $M'$ over 
$Q$. In fact, $\Kos^Q(g_1,...,g_n)\otimes_QR\cong \Kos^R(g_1,...,g_n)$ since $g_1,...,g_n$ is $R$-regular. Thus for $i\geq 0$,
\begin{align*}
\Tor_i^Q(M',N')&=\HH_i(\Kos^Q(g_1,...,g_n)\otimes_Q N')\\
&\cong \HH_i(\Kos^R(g_1,...,g_n)\otimes_R N)\\
&\cong \Tor_i^R(M,N)\;.
\end{align*}
The second condition of Definition \ref{int_lift_dfn}, that $\chi^Q(M',N')=\chi^R(M,N)$, therefore follows.
\end{example}

In the following discussion we give a common way in which a pair of $R$-modules $M,N$ has an intersection lifting. The approach generalizes the previous example.

\begin{definition} We say that the $R$-module $M$ \emph{lifts} to $Q$ if there exists a $Q$-module $M'$
such that $f_1,\dots,f_c$ is a regular sequence on $M'$ and $M'\otimes_QR\cong M$. In this case we call
$M'$ a \emph{lifting of $M$ to $Q$}.
\end{definition}

Note that if $M'$ is a lifting of $M$ to $Q$, then 
\[
\depth M'=\depth M+c
\]

We recall an important fact regarding change of rings for Tor, \cite[11.51]{R}).

\begin{chunk}\label{stand} Suppose that $A$ is a commutative ring, $J$ an ideal
of $A$, and set $B=A/J$. If $X$ is an $A$-module such that
$\Tor^A_i(X,B)=0$ for all $i\geq 1$, then for any $B$-module $Y$ we
have
\[
\Tor^A_i(X,Y)\cong\Tor^B_i(X\otimes_A B,Y)\quad\text{for all $i\ge 0$}\;.
\]
\end{chunk}

\begin{theorem}\label{semilifting}  
Let $M$ and $N$ be finitely generated $R$-modules. Suppose that for some $c'$, $0\le c'\le c$, $M$ lifts to $R_1=Q/(f_1,\dots,f_{c'})$ with lifting $M'$ and 
$N$ lifts to $R_2=Q/(f_{c'+1},\dots,f_c)$ with lifting $N'$. Then
\begin{enumerate}
\item $\dim M'+\dim N'=\dim M+\dim N+c$, and
\item $\Tor^Q_i(M',N')\cong\Tor^R_i(M,N)$ for all $i\ge 0$.
\end{enumerate}
In particular, $\cx^R(M,N)=0$ if $\cx^Q(M',N')=0$.
\end{theorem}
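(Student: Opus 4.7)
The plan is to establish (1) by elementary dimension theory and (2) by a double application of the standard change of rings isomorphism \ref{stand}, passing from $\Tor^Q(M',N')$ to $\Tor^R(M,N)$ through the intermediate ring $R_1$. The ``in particular'' statement will then fall out immediately from (2).

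For (1), I will use that by the lifting hypothesis $f_{c'+1},\dots,f_c$ is an $M'$-regular sequence of length $c-c'$ with quotient $M'/(f_{c'+1},\dots,f_c)M'\cong M$, so standard dimension theory gives $\dim M'=\dim M+(c-c')$. Symmetrically, $f_1,\dots,f_{c'}$ is $N'$-regular and $\dim N'=\dim N+c'$. Summing gives the desired equality.

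The substantive part is (2), and the strategy is the following. First apply \ref{stand} with $A=Q$, $J=(f_1,\dots,f_{c'})$, $B=R_1$, and $X=N'$. The required vanishing $\Tor^Q_i(N',R_1)=0$ for $i\ge 1$ will be checked by resolving $R_1$ via the Koszul complex $\Kos^Q(f_1,\dots,f_{c'})$, a $Q$-free resolution since $f_1,\dots,f_{c'}$ is $Q$-regular, which identifies $\Tor^Q_i(N',R_1)$ with $H_i(\Kos^Q(f_1,\dots,f_{c'};N'))$; this vanishes in positive degree because the lifting hypothesis forces $f_1,\dots,f_{c'}$ to be $N'$-regular. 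Moreover, $N'\otimes_Q R_1\cong N'/(f_1,\dots,f_{c'})N'\cong N'\otimes_{R_2}R\cong N$. Invoking symmetry of Tor then produces
\[
\Tor^Q_i(M',N')\cong\Tor^Q_i(N',M')\cong\Tor^{R_1}_i(N,M')\cong\Tor^{R_1}_i(M',N).
\]
Second, apply \ref{stand} again with $A=R_1$, $J=(f_{c'+1},\dots,f_c)R_1$, $B=R$, and $X=M'$. The analogous Koszul calculation, now using $M'$-regularity of $f_{c'+1},\dots,f_c$, gives $\Tor^{R_1}_i(M',R)=0$ for $i\ge 1$ and $M'\otimes_{R_1}R\cong M$, so \ref{stand} yields $\Tor^{R_1}_i(M',N)\cong\Tor^R_i(M,N)$. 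Composing the two isomorphisms proves (2).

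There is no substantial obstacle; the only point requiring care is keeping straight which of the two subsequences is regular on which of $M'$ and $N'$, so that each invocation of \ref{stand} has its Koszul vanishing hypothesis cleanly verified. Finally, the ``in particular'' clause is immediate: if $\cx^Q(M',N')=0$ then $\Tor^Q_i(M',N')=0$ for all $i\gg 0$, so by (2) the same holds for $\Tor^R_i(M,N)$, forcing the even and odd Hilbert polynomials to vanish and hence $\cx^R(M,N)=0$.
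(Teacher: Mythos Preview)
Your proof is correct and essentially identical to the paper's: both arguments establish (2) by two applications of \ref{stand}, using Koszul complexes to verify the Tor-vanishing hypotheses, and pass through the intermediate ring $R_1$; the only cosmetic difference is that the paper performs the two change-of-rings steps in the opposite order (first $R_1\to R$ via $X=M'$, then $Q\to R_1$ via $X=N'$). Your treatment of (1) via dimension theory is in fact cleaner than the paper's, which (apparently by oversight) writes the argument in terms of depth rather than dimension.
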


\begin{proof} Since $M'$ is a lifting of $M$ to $R_1$, we have $\depth M'=\depth M+c-c'$, and since
$N'$ is a lifting of $N$ to $R_2$, we have $\depth N'=\depth N+c'$. Thus (1) follows.

For (2), we note that $\Tor^{R_1}_i(M',R)$ is the Koszul homology of the Koszul complex
$\Kos^{R_1}(\overline f_{c'+1},\dots,\overline f_c)\otimes_{R_1}M'$. Since $\overline f_{c'+1},\dots,\overline f_c$ is regular on $M'$, we have $\Tor^{R_1}_i(M',R)=0$ for all $i\ge 1$. Thus \ref{stand} says that 
\begin{equation}\label{T1}
\Tor^{R_1}_i(M',N)\cong\Tor^R_i(M,N) \text{ for all $i\ge 0$}
\end{equation}
Since $\overline f_1,\dots,\overline f_{c'}$ in $R_2$ is regular on $N'$, it is also true that
$f_1,\dots,f_{c'}$ in $Q$ is regular on $N'$.  We have that $\Tor^Q_i(R_1,N')$ is the Koszul homology of the Koszul complex $\Kos^Q(f_1,\dots,f_{c'})\otimes_Q N'$.  Thus $\Tor^Q_i(R_1,N')=0$ for all 
$i\ge 1$. Now \ref{stand} gives
\begin{equation}\label{T2}
\Tor^Q_i(M',N')\cong \Tor^{R_1}_i(M',N) \text{ for all $i\ge 0$}
\end{equation}
Thus \eqref{T1} and \eqref{T2} give (2).
\end{proof}

\begin{corollary} Let $M$ and $N$ be finitely generated $R$-modules with $\ell(M\otimes_R N)<\infty$. Suppose that for some $c'$, $0\le c'\le c$, $M$ lifts to $R_1=Q/(f_1,\dots,f_{c'})$ with lifting $M'$ and $N$ lifts to $R_2=Q/(f_{c'+1},\dots,f_c)$ with lifting $N'$. Then the pair 
$M',N'$ is an intersection lifting of $M,N$.
\end{corollary}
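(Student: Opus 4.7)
The plan is to verify directly each of the three conditions defining an intersection lifting (Definition \ref{int_lift_dfn}), and to show along the way that the compatibility hypothesis $\cx^R(M,N)=0$ required by that definition is automatic from the setup. Almost all of the work has been done in Theorem \ref{semilifting}, so what remains is really just bookkeeping.

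First I would invoke Theorem \ref{semilifting} to obtain, for all $i\ge 0$, the isomorphism
\[
\Tor^Q_i(M',N')\cong \Tor^R_i(M,N),
\]
together with the dimension equality $\dim M'+\dim N'=\dim M+\dim N+c$. The dimension equality is precisely condition (1) in Definition \ref{int_lift_dfn}. Taking $i=0$ in the isomorphism yields $M'\otimes_Q N'\cong M\otimes_R N$ as abelian groups, so $\ell(M'\otimes_Q N')=\ell(M\otimes_R N)<\infty$, which takes care of the finite length requirement.

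Next I would verify that $\cx^R(M,N)=0$, so that the notion of intersection lifting applies and both Euler characteristics are defined. Since $Q$ is a regular local ring, every finitely generated $Q$-module has finite projective dimension; in particular $\pd_Q M'<\infty$, so $\Tor^Q_i(M',N')=0$ for all $i\gg 0$. Through the isomorphism above, this forces $\Tor^R_i(M,N)=0$ for $i\gg 0$, hence $\cx^R(M,N)=0$.

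With $\cx^R(M,N)=0$ established, both $\chi^Q(M',N')$ and $\chi^R(M,N)$ are finite alternating sums, and by the isomorphism of Tors term by term one has
\[
\chi^Q(M',N')=\sum_{i\ge 0}(-1)^i\ell\bigl(\Tor^Q_i(M',N')\bigr)
=\sum_{i\ge 0}(-1)^i\ell\bigl(\Tor^R_i(M,N)\bigr)=\chi^R(M,N),
\]
which is condition (2) of Definition \ref{int_lift_dfn}. I do not expect any real obstacle here, since the isomorphism of Tor modules supplied by Theorem \ref{semilifting} makes each required property essentially immediate; the only point that deserves attention is the use of the regularity of $Q$ to deduce $\cx^R(M,N)=0$, which is necessary for the very statement we want to conclude.
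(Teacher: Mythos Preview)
Your proposal is correct and matches the paper's intended approach: the paper states this corollary without proof, treating it as immediate from Theorem~\ref{semilifting}, and you have simply filled in the routine verification of the three conditions in Definition~\ref{int_lift_dfn}. Your observation that $\cx^R(M,N)=0$ must be checked (and follows from regularity of $Q$ via the last line of Theorem~\ref{semilifting}) is a worthwhile point to make explicit.
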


Here is an example illustrating Theorem \ref{semilifting}.  We use the same ring as in Example 
\ref{DHMex}.

\begin{example}
Consider the ring $Q=k[X_1,X_2,X_3,X_4]_{(X_1,X_2,X_3,X_4)}$, and set $R=Q/(X_1X_4-X_2X_3,X_2^2)$,
$M=R/(x_1,x_3,x_4)$, and $N=R/(x_2)$.  Then clearly $M\otimes_RN$ has finite length. One checks easily that $M$ lifts to $R_1=Q/(X_1X_4-X_2X_3)$, with lifting $M'=Q/(X_1,X_3,X_4)$, and $N$ lifts to
$R_2=Q/(X_2^2)$, with lifting $N'=Q/(X_2)$.  Thus Theorem \ref{semilifting} applies.
\end{example}

%%%%%%%%
\section{Appendix}

Below are important results that were deferred from the main body of the paper.

%\subsection{Principal liftings}\label{pls}
\begin{chunk}
{\bf Principal liftings.}\label{pls}
Let $(R',\n',k)$ be a local ring, $f$ a nonzerodivisor in $\n'^2$, and denote by $R$ the quotient 
$R'/(f)$. The following construction from \cite[Theorem 3.2]{BeJo} yields an $R'$-free resolution of an $R$-module $M$ from an $R$-free resolution. 

Let $F = (F_n, \partial_n)$ be an $R$-free resolution of an $R$-module $M$.
We lift $F$ to a sequence $\widetilde{F} = ( \widetilde{F}_n, \widetilde{\partial}_n )$ of free $R'$-modules and maps. For each $n \ge 0$ let 
$\widetilde{t}_{n+2} \colon \widetilde{F}_{n+2}  \to  \widetilde{F}_{n}$ be the $R'$-homomorphism with 
$\widetilde{\partial}_{n+1} \circ \widetilde{\partial}_{n+2} = f \cdot \widetilde{t}_{n+2}$. Then the sequence $F'$ given by
$$\xymatrix@C=50pt{
\cdots \ar[r] & \widetilde{F}_3 \oplus \widetilde{F}_2 \ar[r]^{\left [ \begin{smallmatrix} \widetilde{\partial}_3 & -f \\ \widetilde{t}_3 & -\widetilde{\partial}_2 \end{smallmatrix} \right ]} & \widetilde{F}_2 \oplus \widetilde{F}_1 \ar[r]^{\left [ \begin{smallmatrix} \widetilde{\partial}_2 & -f \\ \widetilde{t}_2 & -\widetilde{\partial}_1 \end{smallmatrix} \right ]} & \widetilde{F}_1 \oplus \widetilde{F}_0 \ar[r]^{\left [ \begin{smallmatrix} \widetilde{\partial}_1 & -f \end{smallmatrix} \right ]} & \widetilde{F}_0 }$$
is an $R'$-free resolution of $M$.

As is proved in \cite{Eis80}, the family of maps $t=\{t_n=\widetilde t_n\otimes R:F_n\to F_{n-2}\}$ defines a degree $-2$ chain endomorphism on $F$. For degree considerations, we write 
$t:\Sigma^{-1}F\to \Sigma F$, where $\Sigma^sF$ denotes the shifted complex with 
$(\Sigma^sF)_i=F_{i-s}$. Let $C$ be the cone of $t$.  Then we have the standard short exact sequence of complexes $0\to \Sigma F \to C\to F\to 0$.  It is easy to see that $C$ is isomorphic to 
$F'\otimes_{R'}R$ as complexes.
Thus, for an $R$-module $N$, the \emph{change of rings long exact sequence} of homology of
\[
0\to \Sigma F\otimes_R N \to C\otimes_R N \to F\otimes_RN \to 0
\]
is
\begin{align}
&\hskip 1.2in \vdots \label{corles}\\
&\Tor^R_2(M,N)\to \Tor^{R'}_3(M,N)\to\Tor^R_3(M,N)\xra x \nonumber\\
&\Tor^R_1(M,N)\to \Tor^{R'}_2(M,N)\to\Tor^R_2(M,N)\xra x \nonumber\\
&\Tor^R_0(M,N) \to\Tor^{R'}_1(M,N)\to \Tor^R_1(M,N)\to 0\nonumber
\end{align}
where the maps $x$ are the $\Tor^R_i(t,N)$.
\end{chunk}

%\subsection{Eisenbud operators and the polynomial ring $\mathcal R=R[x_1,\dots,x_c]$}\label{eisops}
\begin{chunk}\label{eisops}
{\bf Eisenbud operators and the polynomial ring $\mathcal R=R[x_1,\dots,x_c]$.}
In this subsection, Notation \ref{Rnotation} is in force. We discuss the Eisenbud operators as per \cite{Eis80}. Let $F=(F_n,\partial_n)$ be a free resolution of the $R$-module $M$.  Similar to what we did in \ref{pls}, we lift this resolution to $Q$ as a sequence of homomorphisms of 
$Q$-free modules $\widetilde F=(\widetilde F_n,\widetilde\partial_n)$.  Since $\widetilde F$ is a complex modulo $(f_1,...,f_c)$, we can write $\widetilde\partial^2=\sum f_i\widetilde t_i$, where the $\widetilde t_i$ are endomorphisms of $\widetilde F$ of degree $-2$.  Then as is discussed in \cite{Eis80}, the endomorphisms $t_i=\widetilde t_i\otimes R$ of $F$ are of degree $-2$, are well defined up to homotopy, and commute with each other, up to homotopy.  Finally, the action of $x_i$ on 
$\Tor^R(M,N)$ is given by $x_i=\Tor^R(t_i,N)$.

Now let $x$ be a linear form of $\mathcal R_{-2}$ that is eventually surjective on $\Tor^R(M,N)$.  Write 
$x=a_1x_1+\cdots +a_cx_c$ for $a_i\in R$.  Then, appealing to Nakayama's lemma, at least one of the $a_i$ must be a unit in $R$.  Without loss of generality, assume $a_c$ is a unit in 
$R$. Let $\widetilde a_i$ denote a preimage in $Q$ of $a_i$, $1\le i\le c$.  Note that 
$\widetilde a_c$ is also a unit in $Q$. Set $g_i=f_i-(\widetilde a_i/\widetilde a_c)f_c$ for
$1\le i\le c-1$ and $g_c=(1/\widetilde a_c)f_c$.  Then $g_1,\dots,g_c$ is another minimal generating set for $(f_1,...,f_c)$.  Letting $R'=Q/(g_1,\dots,g_{c-1})$ we see that $t$ from \ref{pls} is nothing more than
$a_1t_1+\cdots +a_ct_c$.  Thus the $x$ from \ref{pls} is eventually surjective.
\end{chunk}

%\subsection{Proof of Theorem \ref{Kirby}}
\begin{chunk}
{\bf Proof of Theorem \ref{Kirby}.}
\begin{proof}  Kirby gives a proof in the case where all the $x_i$ have degree $1$.  The obvious modifications are made when assuming all the $x_i$ have degree $-1$; this is the statement of Theorem \ref{Kirby} assuming $d=-1$.  For the general case where all the $x_i$ have the same negative degree
$d<0$, the graded module $G$ naturally decomposes as a direct sum of $d$ graded submodules
$G=G^{(0)}\oplus\cdots\oplus G^{(d-1)}$, where $G^{(j)}=\bigoplus_{i\in\mathbb Z}G_{di+j}$. After reindexing, we can assume each $G^{(j)}$ is a module over a polynomial ring in variables of degree 
$-1$, thus obtaining integers $r^{(j)}$ and $s^{(j)}$ satisfying the conditions of the theorem for 
$d=-1$. By taking the appropriate extreme values of these integers, and indexing back, we obtain the integers $r$ and $s$ satisfying the conditions of Theorem \ref{Kirby} in the general case.
\end{proof}
\end{chunk}

%\subsection{Existence of eventually regular elements}
\begin{chunk}
{\bf Existence of eventually regular elements.}
The following definition and theorem are typically stated for nonnegatively graded rings, but for our purposes we need versions for nonpositively graded rings. 

Recall that $\R=R[x_1,...,x_c]$ is the polynomial ring in $c$ variables, each of degree $-2$, as in Assumptions \ref{Gu}.  
Let $G$ be a graded $\R$-module.  We say that $x\in \R$ is \emph{eventually regular} on $G$
if $(0:_Gx)_n=0$ for all $n\ll 0$.

The following is a nonpositively graded variation of \cite[Page 285]{Zariski-Samuel}; therein eventually regular elements are referred to as \emph{superficial}.  Another synonym of eventually regular occurring in the literature is \emph{filter regular}, see \cite{BIKO}.

\begin{theorem}\label{eres} 
Assume that $(R,\fm,k)$ has an infinite residue field $k$. Let $G$ be a nonpositively graded finitely generated $\R$-module. Then there exists $x\in \R_{-2}$ which is eventually regular on $G$.
\end{theorem}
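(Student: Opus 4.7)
The plan is to mimic the Zariski--Samuel construction of superficial elements in our graded setting, reducing the existence of an eventually regular $x\in\R_{-2}$ to prime avoidance in the $c$-dimensional $k$-vector space $\overline V := \R_{-2}/\fm\R_{-2}$. Let $\{\p_1,\ldots,\p_r\}$ denote the (finite) set of homogeneous associated primes of $G$ over the Noetherian graded ring $\R$, and call $\p_i$ \emph{relevant} if $\p_i\not\supseteq(x_1,\ldots,x_c)$. For any homogeneous $v\in\R_{-2}$, the associated primes of $(0:_G v)$ are exactly those $\p_i$ containing $v$. Thus if $v$ avoids every relevant $\p_i$, then $(0:_G v)$ has all associated primes containing $(x_1,\ldots,x_c)$, so it is annihilated by some power $(x_1,\ldots,x_c)^N$. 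The finitely generated $\R$-module $(0:_G v)$ then becomes finitely generated over the $R$-finite algebra $\R/(x_1,\ldots,x_c)^N$, hence is finitely generated as an $R$-module; since $R$ preserves the grading, this module lives in only finitely many degrees, and in particular $(0:_G v)_n=0$ for $n\ll 0$, which is the desired eventual regularity.

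To produce such a $v$, fix a relevant $\p_i$: since some $x_k\notin\p_i$, the submodule $\p_i\cap\R_{-2}\subsetneq\R_{-2}$ is proper, so the quotient $\R_{-2}/(\p_i\cap\R_{-2})$ is a nonzero finitely generated $R$-module. Nakayama's lemma then yields $(\p_i\cap\R_{-2})+\fm\R_{-2}\subsetneq\R_{-2}$, so the image $L_i$ of $\p_i\cap\R_{-2}$ in $\overline V$ is a proper $k$-subspace. Because $k$ is infinite, the finite union $\bigcup_i L_i$ (over relevant $i$) cannot cover $\overline V$; choosing $\bar v\in\overline V\setminus\bigcup_i L_i$ and lifting to $v\in\R_{-2}$ produces an element satisfying $v\notin\p_i$ for every relevant $\p_i$, as required.

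The main obstacle is the Nakayama step: one must translate the fact that $\p_i$ does not contain the irrelevant ideal into the statement that the trace of $\p_i$ in $\overline V$ is a proper subspace, even when $\p_i$ may not contain $\fm$. Nakayama applied to $\R_{-2}$ as a finitely generated $R$-module handles this cleanly, and the rest reduces to classical prime avoidance in a vector space over an infinite field together with the observation that $\R/(x_1,\ldots,x_c)^N$ is $R$-finite.
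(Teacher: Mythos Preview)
Your proof is correct and follows essentially the same line as the paper's: both separate out the associated primes of $G$ not containing $(x_1,\ldots,x_c)$, use Nakayama plus prime avoidance over the infinite residue field to find $x\in\R_{-2}$ outside each of them, and then conclude that $(0:_G x)_n=0$ for $n\ll 0$ (the paper via a primary decomposition of $0$ in $G$, you via the equivalent observation that $(0:_G x)$ is annihilated by a power of $(x_1,\ldots,x_c)$ and hence is $R$-finite). One small note: your claim that the associated primes of $(0:_G v)$ are \emph{exactly} those $\p_i$ containing $v$ is stronger than what holds and stronger than what you use---only the inclusion $\Ass(0:_G v)\subseteq\{\p_i:v\in\p_i\}$ is needed, and that direction is immediate.
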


\begin{proof}  Let 
\[
0=Q_1\cap \cdots\cap Q_s
\]
be a (graded) primary decomposition of the zero submodule of $G$, and let
$p_i=\sqrt{\Ann_{\R}(G/Q_i)}$ be the prime ideal associated to $Q_i$.  

First assume that ${\R}_{-2}\subset p_i$ for all $1\le i\le s$. It follows that there exists $a\ge 1$ such that 
\[
({\R}_-)^a\subseteq \Ann_{\R}(G/Q_1)\cap \cdots \cap \Ann_{\R}(G/Q_s)
\]
where ${\R}_-$ denotes $\R_{\leq -1}$.  In other words,
\[
({\R}_-)^aG\subseteq Q_1\cap \cdots \cap Q_s=0\;.
\]
This can only happen if $G_n=0$ for all $n\ll 0$, and so any $x\in {\R}_{-2}$ will do the job.

Therefore we may assume that for some $1\le h\le s$, 
${\R}_{-2}\not\subset p_i$ for $1\le i\le h$ and 
${\R}_{-2} \subset p_i$ for $h+1\le i\le s$.  Let $S_i=p_i\cap {\R}_{-2}$
for $1\le i\le h$.  Then, employing Nakayama's lemma, $(S_i+\fm {\R}_{-2})/\fm {\R}_{-2}$ is a proper
subspace of ${\R}_{-2}/\fm {\R}_{-2}$ for all $1\le i \le h$. As $k$ is infinite, the vector space $\R_{-2}/\fm \R_{-2}$ cannot be a union of finitely many proper subspaces. Thus there
exists $x\in {\R}_{-2}$ such that $x+\fm {\R}_{-2}$ is not in 
$(S_i+\fm {\R}_{-2})/\fm {\R}_{-2}$ for all $1\le i\le h$.  In other words,
$x\in {\R}_{-2}\setminus p_i$ for all $1\le i\le h$.

Similar to the argument above, there exists an integer $m<0$ such that 
$G_n\subseteq Q_{h+1}\cap\cdots\cap Q_s$ for all $n\le m$. Let 
$u\in (0:_Gx)_n$ for some $n\le m$.  Then $u\in Q_{h+1}\cap\cdots\cap Q_s$.
Since $xu=0\in Q_1\cap \cdots\cap Q_h$, and $x$ is not in $p_i$ for 
$1\le i\le h$, it follows from the definition of primary submodules that $u\in Q_1\cap \cdots\cap Q_h$. Thus
$u\in Q_1\cap\cdots \cap Q_s=0$.
\end{proof}

\end{chunk}

\section*{Acknowledgements} 
We are thankful for the anonymous referee's careful reading and detailed comments that greatly helped to improve the exposition of the paper.

\end{document}